\newtheorem{theorem}{Theorem}[section]
\newtheorem{lemma}[theorem]{Lemma}
\newtheorem{proposition}[theorem]{Proposition}
\newtheorem{corollary}[theorem]{Corollary}
\theoremstyle{definition}
\newtheorem{example}[theorem]{Example}
\newtheorem{algorithm}[theorem]{Algorithm}
\newtheorem{problem}[theorem]{Problem}
\theoremstyle{remark}
\newtheorem{remark}[theorem]{Remark}
\begin{document}
	
%\title[Irreducible numerical semigroups with fixed Frobenius number and a set of positive integers]{The set of irreducible numerical semigroups with fixed Frobenius number and containing a fixed set of positive integers}
\title{On sets related to integer partitions with quasi-required elements and disallowed elements}%[Irreducible numerical semigroups with fixed Frobenius number and a set of positive integers]

\author{Aureliano M. Robles-P\'erez\thanks{Departamento de Matem\'atica Aplicada \& Instituto de Matem\'aticas (IMAG), Universidad de Granada, 18071-Granada, Spain. \newline E-mail: \textbf{arobles@ugr.es} (\textit{corresponding author}); ORCID: \textbf{0000-0003-2596-1249}.}
	\mbox{ and} Jos\'e Carlos Rosales\thanks{Departamento de \'Algebra \& Instituto de Matem\'aticas (IMAG), Universidad de Granada, 18071-Granada, Spain. \newline E-mail: \textbf{jrosales@ugr.es}; ORCID: \textbf{0000-0003-3353-4335}.} }

\date{ }

\maketitle

\begin{abstract}
	Given a set $A$ of non-negative integers and a set $B$ of positive integers, we are interested in computing all sets $C$ (of positive integers) that are minimal in the family of sets $K$ (of positive integers) such that ($i$) $K$ contains no elements generated by non-negative integer linear combinations of elements in $A$ and ($ii$) for any partition of an element in $B$ there is at least one summand that belongs to $K$. To solve this question, we translate it into a numerical semigroups problem.
\end{abstract}
\noindent {\bf Keywords:} Integer partition, numerical semigroup, irreducible numerical semigroup, Ap\'ery set.

\medskip

\noindent{\it 2010 AMS Classification:} 05A17, 11P81, 20M14.

\section{Introduction}

Let us recall that an \emph{integer partition} (or simply a \emph{partition}) of a positive integer $b$ is a way of writing $b$ as a sum of positive integers. If two sums differ only in the order of their summands, then they are considered the same partition. For example, the seven partitions of $5$ are $5$, $4+1$, $3+2$, $3+1+1$, $2+2+1$, $2+1+1+1$, and $1+1+1+1+1$.

The beginnings of partition theory can be dated back to 1676, when G.~W. Leibniz asks J. Bernouilli how many partitions are associated to a positive integer (see \cite{andrews2013}). In the middle of the 18th century, the same question is posed by P. Naud\'e to L. Euler, who gives the first results relevant for this question with the help of so-called generating functions. Since then, many works have studied several topics related to partitions (see \cite{andrews}).

In most cases, those works study the number of possible partitions. However, our purpose is different. More specifically, we will generalise an earlier work by the second author. For a non-empty finite subset $B$ of $\mathbb{N} \setminus \{0\}$ (where $\mathbb{N}=\{0,1,2,\ldots\}$ is the set of non-negative integers), in \cite{royal} it is shown an algorithm to compute all sets $C \subseteq \mathbb{N} \setminus \{0\}$ which are minimal (with respect to the inclusion order) in the family of sets that satisfy the following condition: if $x_1+\cdots+x_n$ is a partition of an element in $B$, then at least one summand of this partition belongs to $C$. Let us observe that, in a sense, the elements of the partitions are \emph{quasi-required} because at least one of these elements belongs to a prefixed set.

Now, we add a second condition over the partition in order to, in a sense, do not allow some elements. As usual, $\langle A \rangle$ is the monoid generated by $A$ (see Section~\ref{preliminar}, third paragraph).

\begin{problem}\label{prob}
	If $A$ is a non-empty subset of $\mathbb{N}$ and $B$ is a non-empty finite subset of $\mathbb{N} \setminus \{0\}$, then compute all subsets $C\subseteq \mathbb{N}\setminus \{0\}$ that are minimal (with respect to the inclusion order) in the family of sets $K\subseteq \mathbb{N}\setminus \{0\}$ that satisfy the following properties:
	\begin{enumerate}[label=(\roman*)]
		\item $K\cap \langle A \rangle =\emptyset$.
		\item If $x_1+\cdots+x_n$ is a partition of an element of $B$, then at least one of the summands of the partition belongs to $K$.
	\end{enumerate}
\end{problem}
  
To achieve our aim, we use the theory of numerical semigroups. In particular, we consider the family of irreducible numerical semigroups and the concept of Ap\'ery set of a numerical semigroup. In Section~\ref{preliminar} we recall some preliminaries on numerical semigroups and, moreover, we relate Problem~\ref{prob} to a question on numerical semigroups.

Let $\mathscr{S}$ be the set of all numerical semigroups. If $A$ is a non-empty subset of $\mathbb{N}$, $F\in\mathbb{N}\setminus\{0\}$, and $\mathrm{F}(S)$ is the Frobenius number of a numerical semigroup $S$ (see Section~\ref{preliminar}, first paragraph), then we denote by
\[ \mathscr{S}(A,F) = \{ S\in\mathscr{S} \mid A\subseteq S \mbox{ and } \mathrm{F}(S)=F \} \]
and by
\[ \mathscr{I}(A,F) = \{ S\in\mathscr{S}(A,F) \mid S \mbox{ is irreducible} \}. \]
In Section~\ref{sect-I(A,F)}, we show that $\mathscr{I}(A,F)\not=\emptyset$ if and only if $\mathscr{S}(A,F)\not=\emptyset$ if and only if $F\notin\langle A \rangle$.

In \cite{forum} it is described an algorithmic process, implemented in the function \textsf{IrreducibleNumericalSemigroupWithFrobeniusNumber($\cdot$)} of \cite{numericalsgps}, which computes all the irreducible numerical semigroups with a given Frobenius number.

Let us observe that we can compute all the elements of $\mathscr{I}(A,F)$ quite simply: by using \textsf{IrreducibleNumericalSemigroupWithFrobeniusNumber$(F)$} we obtain all the irreducible numerical semigroups with Frobenius number equal to $F$ and, after that, we remove those that do not contain the set $A$.

The main purpose of Section~\ref{sect-I(A,F)} is to show an algorithm for computing $\mathscr{I}(A,F)$ that improves the one described above.

As a first application of the above algorithm, in Section~\ref{sect-S(A,F)} we give one to compute $\mathscr{S}(A,F)$. Such an algorithm is a generalisation of the one shown in \cite{computer} for computing all the numerical semigroups with as given Frobenius number.

Let us observe that Section~\ref{sect-S(A,F)} can be considered a by-product result and, in a first reading, it can be omitted without losing the thread.

If $A$ is a non-empty subset of $\mathbb{N}$ and $B$ is a non-empty finite subset of $\mathbb{N} \setminus \{0\}$, then we denote by
\[ \mathscr{S}(A,B) = \{ S \in \mathscr{S} \mid A\subseteq S \mbox{ and } S\cap B=\emptyset \}. \]
We begin Section~\ref{sect-M(A,B)} by showing that $\mathscr{S}(A,B)\not=\emptyset$ if and only if $B\cap\langle A \rangle=\emptyset$. Denoting by
\[ \mathscr{M}(A,B) = \mathrm{Maximal} \big(\mathscr{S}(A,B)\big), \]
in Section~\ref{sect-M(A,B)} we use the results of Section~\ref{sect-I(A,F)} in order to show an algorithm for computing $\mathscr{M}(A,B)$. Such an algorithm is a generalisation of the one shown in \cite{royal} for computing the maximal elements of $\{ S \in \mathscr{S} \mid S \cap B = \emptyset \}$.

Finally, in Section~\ref{sect-problem} we show an algorithm as an answer to Problem~\ref{prob}. As one might expect, the algorithm described is a generalisation of the one shown in \cite{royal}.

%In \cite{royal} it is shown an algorithm for computing all the subsets $C\subseteq \mathbb{N}\setminus \{0\}$ that are minimal with respect to the property if $x_1+\dots+x_n$ is a partition of an element of $B$, then at least one of the summands of the partition belongs to $C$. Therefore, the algorithm described in Section~\ref{sect-problem} is a generalisation of the one shown in \cite{royal}.

\section{Preliminaries}\label{preliminar}

A \emph{submonoid} of $(\mathbb{N},+)$ is a subset $M\subseteq\mathbb{N}$ that is closed under addition and contains the zero element. A \emph{numerical semigroup} is a submonoid $S$ of $\mathbb{N}$ such that $\mathbb{N} \setminus S$ is finite. The finiteness of $\mathbb{N} \setminus S$ allows us to define two relevant invariants of $S$. Namely, the greatest integer that does not belong to $S$, called the \emph{Frobenius number} of $S$ and denoted by $\mathrm{F}(S)$, and the cardinality of $\mathbb{N} \setminus S$, called the \emph{genus} of $S$ and denoted by $\mathrm{g}(S)$.

Following the notation of \cite{pacific}, a numerical semigroup is \emph{irreducible} if it can not be expressed as the intersection of two numerical semigroups containing it strictly. In \cite{pacific} it is shown that a numerical semigroup $S$ is irreducible if and only if is maximal (with respect the inclusion order) in the set of all numerical semigroups with Frobenius number equal to $\mathrm{F}(S)$. Moreover, from \cite{barucci} and \cite{froberg}, we can state that the set of all irreducible numerical semigroups is the union of two interesting families of numerical semigroups: the symmetric and the pseudo-symmetric ones.

If $X$ is a non-empty subset of $\mathbb{N}$, then we denote by $\langle X \rangle$ the submonoid of $(\mathbb{N},+)$ generated by $X$, that is,
\[ \langle X \rangle=\big\{\lambda_1x_1+\cdots+\lambda_nx_n \mid n\in\mathbb{N}\setminus \{0\}, \ \{x_1,\ldots,x_n\}\subseteq X, \ \{\lambda_1,\ldots,\lambda_n\}\subseteq \mathbb{N}\big\}. \]
It is well known (see Lemma~2.1 of \cite{springer}) that $\langle X \rangle$ is a numerical semigroup if and only if $\gcd(X)=1$.

If $M$ is a submonoid of $(\mathbb{N},+)$ and $M=\langle X \rangle$, then we say that $X$ is a \emph{system of generators} of $M$. Moreover, if $M\not=\langle Y \rangle$ for any subset $Y\subsetneq X$, then we say that $X$ is a \emph{minimal system of generators} of $M$.

The following result is \cite[Corollary~2.8]{springer}.

\begin{lemma}\label{lem-6}
	If $M$ is a	submonoid of $(\mathbb{N},+)$, then $M$ has a unique minimal system of generators that in addition is finite.
\end{lemma}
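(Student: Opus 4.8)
The plan is to exhibit the minimal system of generators explicitly as the set of \emph{irreducible} elements of $M$ and then check the three required properties: that it generates $M$, that it is the unique minimal one, and that it is finite. Write $M^{*}=M\setminus\{0\}$ and set
\[ A = M^{*}\setminus (M^{*}+M^{*}), \]
so that $A$ consists of the nonzero elements of $M$ that cannot be written as a sum of two nonzero elements of $M$.

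First I would show that $A$ is a system of generators of $M$ by a well-ordering (strong induction) argument on the elements of $M$. The element $0$ is the empty combination; and given $m\in M^{*}$, either $m\in A$, or else $m=a+b$ with $a,b\in M^{*}$ and $a,b<m$, in which case the induction hypothesis writes $a$ and $b$ as $\mathbb{N}$-linear combinations of elements of $A$, and hence so is $m$.

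Next I would show that $A\subseteq X$ for every system of generators $X$ of $M$. Let $x\in A$ and write $x=\lambda_{1}x_{1}+\cdots+\lambda_{n}x_{n}$ with $x_{i}\in X$ and $\lambda_{i}\in\mathbb{N}$. Since $x\neq 0$, at least one coefficient is positive; and if the right-hand side involved at least two nonzero summands counted with multiplicity, then $x$ would lie in $M^{*}+M^{*}$, contradicting $x\in A$. Hence $x=x_{i}\in X$ for some $i$, so $A$ is contained in every system of generators. Since $A$ itself generates $M$, this makes $A$ the unique minimal system of generators: any minimal system $X$ satisfies $A\subseteq X$ with $A$ already generating, so minimality forces $X=A$.

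It remains to prove finiteness. If $M=\{0\}$ then $A=\emptyset$. Otherwise put $n=\min M^{*}$, and for each residue $r\in\{0,1,\dots,n-1\}$ for which $M$ meets the class $r+n\mathbb{Z}$ let $w_{r}$ be the least element of $M$ in that class; then every $m\in M$ equals $w_{r}+kn$ for suitable $r$ and $k\in\mathbb{N}$, so $\{n\}\cup\{w_{r}\mid r\}$ is a finite system of generators of $M$. By the previous paragraph $A$ is contained in it, hence $A$ is finite. The argument is entirely elementary; the only point that calls for slight care is the uniqueness step, which is best phrased via ``$A$ is contained in every generating set'' rather than by comparing minimal systems directly. (Alternatively one could divide all elements of $M$ by $\gcd(M^{*})$ to reduce to the numerical-semigroup case, but this detour is unnecessary.)
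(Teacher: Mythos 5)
Your proof is correct, but note that the paper itself offers no argument for this statement: Lemma~\ref{lem-6} is quoted from \cite[Corollary~2.8]{springer}, so the only meaningful comparison is with that reference. Your identification of the minimal system as the set of irreducible elements $A=M^{*}\setminus(M^{*}+M^{*})$, the strong induction showing that $A$ generates, and the observation that $A$ is contained in every system of generators (hence is the unique minimal one) is exactly the standard argument from the cited source. Where you genuinely diverge is the finiteness step: the usual route divides by $d=\gcd(M^{*})$ to reduce to the numerical semigroup $M/d$ (the ``detour'' you mention), whereas you work directly in $M$, taking $n=\min M^{*}$ and the least element of $M$ in each residue class modulo $n$ that $M$ meets. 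This produces a generating set with at most $n$ nonzero elements which must contain $A$, and it is in effect the Ap\'ery-set idea --- that $\{n\}\cup\mathrm{Ap}(S,n)$ generates $S$ --- extended to arbitrary submonoids of $\mathbb{N}$; it is slightly more self-contained than the gcd reduction and fits naturally with the role the Ap\'ery set plays later in the paper (Lemma~\ref{lem-29} and Proposition~\ref{prop-30}). The only cosmetic caveat is the degenerate case $M=\{0\}$: since the paper defines $\langle X\rangle$ only for non-empty $X$, one must either adopt the convention $\langle\emptyset\rangle=\{0\}$ or take $\{0\}$ as the minimal system there; this affects nothing in your argument.
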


We denote by $\mathrm{msg}(M)$ the minimal system of generators of $M$. The next result is \cite[Corollary~2.9]{springer}.

\begin{lemma}\label{lem-7}
	If $M$ is a	submonoid of $(\mathbb{N},+)$ generated by $\{m_1<\cdots<m_e\}$ (with $m_1\not=0$), then $\mathrm{msg}(M)=\{m_1<\cdots<m_e\}$ if and only if $m_{i+1}\notin\langle m_1,\ldots,m_i \rangle$ for all $i\in\{1,\ldots,e-1\}$.
\end{lemma}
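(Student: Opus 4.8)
The plan is to prove both implications essentially from the definition of a minimal system of generators, using Lemma~\ref{lem-6} only at the very end to identify a minimal generating set with $\mathrm{msg}(M)$. Throughout I will use repeatedly the elementary fact that if a positive element $m$ of $M$ is written as a $\mathbb{N}$-linear combination $\sum_k\lambda_k g_k$ of generators $g_k$, then every $g_k$ with $\lambda_k>0$ satisfies $g_k\le m$ (since the sum is at least $g_k$), and in particular the smallest positive element of $M=\langle m_1,\dots,m_e\rangle$ is $m_1$.

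For the forward implication I would argue by contraposition. Suppose $m_{i+1}\in\langle m_1,\dots,m_i\rangle$ for some $i\in\{1,\dots,e-1\}$, and fix an expression of $m_{i+1}$ as a $\mathbb{N}$-linear combination of $m_1,\dots,m_i$. Substituting this expression for each occurrence of $m_{i+1}$ in an arbitrary representation of an element of $M=\langle m_1,\dots,m_e\rangle$ shows that $M=\langle\{m_1,\dots,m_e\}\setminus\{m_{i+1}\}\rangle$. Since $\{m_1,\dots,m_e\}\setminus\{m_{i+1}\}$ is a proper subset of $\{m_1,\dots,m_e\}$, the latter is not a minimal system of generators of $M$, hence $\mathrm{msg}(M)\neq\{m_1,\dots,m_e\}$.

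For the backward implication, assume that $m_{i+1}\notin\langle m_1,\dots,m_i\rangle$ for all $i\in\{1,\dots,e-1\}$, and let $Y\subsetneq\{m_1,\dots,m_e\}$; I claim $\langle Y\rangle\neq M$. Suppose otherwise and pick $m_j\in\{m_1,\dots,m_e\}\setminus Y$. Then $m_j\in\langle Y\rangle$, so $m_j$ is a $\mathbb{N}$-linear combination of elements of $Y$, and by the elementary fact above every generator appearing with positive coefficient is $\le m_j$; since none of them equals $m_j$ (as $m_j\notin Y$), each is $<m_j$, hence lies in $\{m_1,\dots,m_{j-1}\}$. If $j=1$ this is impossible because $m_1$ is the least positive element of $M$; if $j\ge 2$ it gives $m_j\in\langle m_1,\dots,m_{j-1}\rangle$, contradicting the hypothesis. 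Therefore no proper subset of $\{m_1,\dots,m_e\}$ generates $M$, so $\{m_1,\dots,m_e\}$ is a minimal system of generators of $M$; by the uniqueness asserted in Lemma~\ref{lem-6}, $\mathrm{msg}(M)=\{m_1,\dots,m_e\}$.

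I do not expect a genuine obstacle here; the only points requiring care are the monotonicity argument in the backward direction (a representation of the smaller element $m_j$ cannot involve a strictly larger generator) and the degenerate case $j=1$, both of which are handled by the remark that $m_1$ is the smallest positive element of $M$.
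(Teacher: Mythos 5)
Your proof is correct. Note that the paper itself gives no argument for this lemma: it is quoted verbatim as Corollary~2.9 of the reference \cite{springer}, so there is no internal proof to compare against. Your two directions are sound: the contrapositive substitution argument for necessity, and for sufficiency the observation that a representation of $m_j$ over a generating set not containing $m_j$ can only involve strictly smaller generators, which forces $m_j\in\langle m_1,\ldots,m_{j-1}\rangle$ (impossible for $j=1$ since $m_1$ is the least positive element, and contradicting the hypothesis for $j\ge 2$), followed by the uniqueness statement of Lemma~\ref{lem-6} to identify the minimal generating set with $\mathrm{msg}(M)$. For what it is worth, the textbook route proves uniqueness and minimality together by showing that the minimal system of generators is exactly $M^*\setminus(M^*+M^*)$ with $M^*=M\setminus\{0\}$; your ordering argument is a slightly more hands-on but equally valid alternative that only uses Lemma~\ref{lem-6} as a black box, exactly as the paper's setup invites.
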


Let $S$ be a numerical semigroup and $n\in S\setminus\{0\}$. The \emph{Ap\'ery set of $n$ in $S$} (named so in honour of \cite{apery}) is $\mathrm{Ap}(S,n)=\{s\in S\mid s-n\notin S\}$. The next result is \cite[Lemma 2.4]{springer}.

\begin{lemma}\label{lem-29}
	Let $S$ be a numerical semigroup and let $n\in S\setminus\{0\}$. Then $\mathrm{Ap}(S,n)=\{w(0)=0,w(1),\ldots,w(n-1)\}$, where $w(i)$ is the least element of $S$ congruent with $i$ modulo $n$, for all $i\in\{0,\ldots,n-1\}$.
\end{lemma}

If $S$ is a numerical semigroup and $\mathrm{Ap}(S,n)=\{w(0),w(1),\ldots,w(n-1)\}$, then we denote by $\theta_n(S)=(w(1),\ldots,w(n-1))$. Moreover, if $(x_1,\ldots,x_r)$ and $(y_1,\ldots,y_r)$ belong to $\mathbb{N}^r$, then we define the operation \[(x_1,\ldots,x_r)\lor(y_1,\ldots,y_r)=\left(\max\{x_1,y_1\},\ldots,\max\{x_r,y_r\}\right)\]
and we consider the usual product order
\[(x_1,\ldots,x_r)\leq(y_1,\ldots,y_r) \mbox{ if } x_i\leq y_i \mbox{ for all } i\in\{1,\ldots,r\}.\]

The following result is \cite[Corollary 4.5]{royal} and useful for proving Theorem~\ref{thm-31}.

\begin{proposition}\label{prop-30}
	Let $n$ be a positive integer and let $\mathscr{S}_n = \{ S\in\mathscr{S} \mid n\in S \}$. Then:
	\begin{enumerate}
		\item $\theta_n:\mathscr{S}_n \to \mathbb{N}^{n-1}$ is an injective application.
		\item If $(x_1,\ldots,x_{n-1})\in\mathrm{Im}(\theta_n)$, then $S=\langle x_1,\ldots,x_{n-1},n \rangle \in \mathscr{S}_n$ and $\theta_n(S)=(x_1,\ldots,x_{n-1})$.
		\item If $\{S,T\}\subseteq\mathscr{S}_n$, then $S\subseteq T$ if and only if $\theta_n(T)\leq\theta_n(S)$.
		\item If $\{S,T\}\subseteq\mathscr{S}_n$, then $\theta_n(S\cap T)=\theta_n(S)\lor\theta_n(T)$.
	\end{enumerate}
\end{proposition}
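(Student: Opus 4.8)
The plan is to distill from Lemma~\ref{lem-29} a single structural fact and then read off all four assertions from it. The fact is the following \emph{normal form}: for $S\in\mathscr{S}_n$ with $\mathrm{Ap}(S,n)=\{w(0)=0,w(1),\dots,w(n-1)\}$ one has
\[ S=\bigcup_{i=0}^{n-1}\big(w(i)+n\mathbb{N}\big), \]
or, equivalently, an integer $x$ with $x\equiv i\pmod n$ lies in $S$ if and only if $x\ge w(i)$. The only nontrivial point here is that $S$ is upward closed along each residue class modulo $n$: if $x\in S$ then $x+n\in S$, because $n\in S$ and $S$ is closed under addition, so once the least element $w(i)$ of the class (as identified by Lemma~\ref{lem-29}) is reached, every larger element of that class belongs to $S$. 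This observation is where the hypothesis $n\in S$ is really used, and it is the crux of the proposition; the rest is bookkeeping with residue classes, the only part needing a separate argument being the generation statement in item~(2).

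Granting the normal form, item~(1) is immediate: $\theta_n(S)$ determines $\mathrm{Ap}(S,n)$, which together with $n$ reconstructs $S$ by the displayed formula, so $\theta_n$ is injective. For item~(3), if $S\subseteq T$ then for each $i\in\{1,\dots,n-1\}$ the element $w_S(i)$ lies in $T$ and is congruent to $i$ modulo $n$, whence $w_T(i)\le w_S(i)$, i.e. $\theta_n(T)\le\theta_n(S)$. Conversely, if $w_T(i)\le w_S(i)$ for all $i$, take $s\in S$ with $s\equiv i\pmod n$; then $s\ge w_S(i)\ge w_T(i)$, so $s\in T$ by the normal form (the case $i=0$ being trivial, as $w_S(0)=w_T(0)=0$), giving $S\subseteq T$. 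For item~(4), first note $S\cap T\in\mathscr{S}_n$ (it is a submonoid containing $n$, and $\mathbb{N}\setminus(S\cap T)=(\mathbb{N}\setminus S)\cup(\mathbb{N}\setminus T)$ is finite); then an integer $x\equiv i\pmod n$ lies in $S\cap T$ iff $x\ge w_S(i)$ and $x\ge w_T(i)$, i.e. iff $x\ge\max\{w_S(i),w_T(i)\}$, so $w_{S\cap T}(i)=\max\{w_S(i),w_T(i)\}$ and hence $\theta_n(S\cap T)=\theta_n(S)\lor\theta_n(T)$.

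For item~(2), write $(x_1,\dots,x_{n-1})=\theta_n(T)$ with $T\in\mathscr{S}_n$, so that $x_i=w_T(i)$. The inclusion $\langle x_1,\dots,x_{n-1},n\rangle\subseteq T$ is clear. For the reverse inclusion take $t\in T$ and let $i\in\{0,\dots,n-1\}$ with $t\equiv i\pmod n$: if $i=0$ then $t\in\langle n\rangle$, and if $i\ne 0$ then $t\ge w_T(i)=x_i$ with $t-x_i\in n\mathbb{N}$, so $t\in\langle x_1,\dots,x_{n-1},n\rangle$. Therefore $S:=\langle x_1,\dots,x_{n-1},n\rangle=T$; in particular $S\in\mathscr{S}_n$ and $\theta_n(S)=\theta_n(T)=(x_1,\dots,x_{n-1})$. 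I expect the proof to be short overall: the main work is isolating the normal form, and the rest follows by routine manipulations with the functions $w(i)$.
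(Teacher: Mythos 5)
Your proof is correct: the normal form $S=\bigcup_{i=0}^{n-1}\bigl(w(i)+n\mathbb{N}\bigr)$ is exactly the right consequence of Lemma~\ref{lem-29} (upward closure of each residue class once $n\in S$), and all four items follow from it by the residue-class bookkeeping you give, including the generation statement in item~(2). Note that the paper itself gives no proof of this proposition --- it is quoted from \cite[Corollary~4.5]{royal} --- so there is nothing internal to compare against; your argument is the standard one and is complete as written.
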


Now, let $A$ be a non-empty subset of $\mathbb{N}$ and let $B$ be a non-empty finite subset of $\mathbb{N} \setminus \{0\}$ such that $\langle A \rangle \cap B = \emptyset$. Let us consider the notation
\begin{itemize}
	\item $\mathfrak{P}(B) = \{ (x_1,\ldots,x_n) \mid x_1+\cdots+x_n \mbox{ is a partition of some element of } B\}$,
	\item $\mathfrak{G}(A,B) = \{ \{x_1,\ldots,x_n\}\setminus \langle A \rangle \mid (x_1,\ldots,x_n) \in \mathfrak{P}(B) \}$,
	\item $\mathfrak{L}(A,B) = \{ K \subseteq \mathbb{N} \setminus \{0\} \mid K\cap X \not= \emptyset \mbox{ for all } X\in\mathfrak{G}(A,B) \}$,
	\item $\mathfrak{m}(A,B) =  \mathrm{Minimal}\big( \mathfrak{L}(A,B) \big)$.
\end{itemize}

Observe that, with this notation, the question proposed at Problem~\ref{prob} is equivalent to give an algorithm for computing $\mathfrak{m}(A,B)$. 

Our next purpose is to prove that $C \in \mathfrak{m}(A,B)$ if and only if $\mathbb{N}\setminus C \in \mathscr{M}(A,B)$. To do so, we need the following three results.

\begin{lemma}\label{lem-33}
	If $C \in \mathfrak{m}(A,B)$, then the following conditions are satisfied.
	\begin{enumerate}
		\item $B \subseteq C \subseteq \{1,\ldots,\max(B)\}\setminus \langle A \rangle$. \label{lem-33-1}
		\item If $c\in C$, then there exists $X\in\mathfrak{G}(A,B)$ such that $C\cap X=\{c\}$.
		\item If $x,y$ are positive integers and $x+y\in C$, then $C\cap \{x,y\}\not=\emptyset$.
		\item If $\{x,y\}\subseteq\mathbb{N}\setminus C$, then $x+y \in \mathbb{N}\setminus C$. \label{lem-33-4}
	\end{enumerate}
\end{lemma}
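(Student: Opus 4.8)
The plan is to derive all four items from a single mechanism --- the minimality of $C$ in $\mathfrak{L}(A,B)$ --- and I would establish items~1 and~2 first, since item~3 relies on both of them. I would begin by observing that $B\subseteq C$: for each $b\in B$ the one-term partition $(b)$ belongs to $\mathfrak{P}(B)$, and since $\langle A\rangle\cap B=\emptyset$ the associated member of $\mathfrak{G}(A,B)$ is just $\{b\}$, which $C$ must meet, forcing $b\in C$. For the upper bound $C\subseteq\{1,\dots,\max(B)\}\setminus\langle A\rangle$, I would show that any $c\in C$ with $c>\max(B)$ or $c\in\langle A\rangle$ is ``useless'', in the sense that it belongs to no member of $\mathfrak{G}(A,B)$: each such member has the form $\{z_1,\dots,z_n\}\setminus\langle A\rangle$ with $z_1+\cdots+z_n\in B$, so every $z_i$ is at most $\max(B)$ and none of them lies in $\langle A\rangle$. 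Then $C\setminus\{c\}$ would still meet every member of $\mathfrak{G}(A,B)$, contradicting the minimality of $C$.

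Item~2 is the standard ``essentiality'' argument: if some $c\in C$ belonged to no set $X\in\mathfrak{G}(A,B)$ with $C\cap X=\{c\}$, then for every $X\in\mathfrak{G}(A,B)$ either $c\notin X$ or $C\cap X$ would contain a second element, so $C\setminus\{c\}$ would meet all of $\mathfrak{G}(A,B)$ --- again contradicting minimality. For item~3, given positive integers $x,y$ with $x+y\in C$, I would invoke item~2 to obtain $X\in\mathfrak{G}(A,B)$ with $C\cap X=\{x+y\}$, coming from a partition $\pi$ of some $b\in B$ having $x+y$ among its parts. The key move is to form a new partition $\pi'$ of $b$ by replacing \emph{every} part of $\pi$ equal to $x+y$ with the two parts $x$ and $y$; this is legitimate because $x,y\ge 1$. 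The set $Y\in\mathfrak{G}(A,B)$ attached to $\pi'$ then equals $(X\setminus\{x+y\})\cup(\{x,y\}\setminus\langle A\rangle)$, where I use $x+y\notin\langle A\rangle$, which comes from item~1. Since $C\cap(X\setminus\{x+y\})=\emptyset$ and $C$ must meet $Y$, I get $C\cap(\{x,y\}\setminus\langle A\rangle)\ne\emptyset$, and $C\cap\langle A\rangle=\emptyset$ (item~1 again) upgrades this to $C\cap\{x,y\}\ne\emptyset$. Finally, item~4 is just the contrapositive of item~3, after noting that if $x=0$ or $y=0$ the claim is trivial because $0\notin C$.

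I expect the only real subtlety to be the refinement in item~3: one must split \emph{all} occurrences of $x+y$ in $\pi$, not merely one, for if $x+y$ occurred with multiplicity at least two and only one copy were split, then $x+y$ would remain a part of $\pi'$, the set $Y$ would still contain $x+y$, and $C$ could meet $Y$ through $x+y$ alone --- blocking the desired conclusion. Everything else is routine manipulation of the operation of deleting $\langle A\rangle$, and the two facts $x+y\notin\langle A\rangle$ and $C\cap\langle A\rangle=\emptyset$ supplied by item~1 are precisely what make that manipulation go through; this is why items~1 and~2 must be proved before item~3.
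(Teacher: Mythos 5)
Your proof is correct and follows essentially the same route as the paper: the minimality/essentiality argument for item~2, the splitting of \emph{all} parts equal to $x+y$ in item~3, and item~4 as a reformulation of item~3. The only differences are cosmetic --- you spell out item~1 (which the paper dismisses as trivial) and note the harmless $x=0$ or $y=0$ case in item~4.
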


\begin{proof}
	\mbox{ }
	\begin{enumerate}
		\item It is trivial from the definitions.
		\item If $C \in \mathfrak{m}(A,B)$ and $c\in C$, then $C\setminus\{c\} \not\in \mathfrak{L}(A,B)$. Therefore, there exits $X\in \mathfrak{G}(A,B)$ such that $(C\setminus\{c\}) \cap X = \emptyset$. Moreover, since $C \in \mathfrak{m}(A,B)$, then $C \in \mathfrak{L}(A,B)$ and, consequently, $C\cap X \not= \emptyset$. Thereby, we conclude that $C\cap X=\{c\}$.
		\item Let $x,y$ be positive integers such $x+y\in C$. By applying the previous item, there exists $(x_1,\ldots,x_n) \in \mathfrak{P}(B)$ such that $x_1+\cdots+x_n = b\in B$ and
		\begin{equation}\label{eq-1}
			C \cap\left( \{x_1,\ldots,x_n\} \setminus \langle A \rangle \right) = \{x+y\}.
		\end{equation}
		Now, without loss of generality, we can suppose that $x_1=\cdots=x_k=x+y$ and $x_i\not=x+y$ for all $i\in\{k+1,\ldots,n\}$. Thus, $(x+y)+\stackrel{(k)}{\cdots}+(x+y)+x_{k+1}+\cdots+x_n=b$, that is, $\{x,y,x_{k+1},\ldots,x_n\} \setminus \langle A \rangle \in \mathfrak{G}(A,B)$ and, since $C\in\mathfrak{L}(A,B)$, we have that
		\begin{equation}\label{eq-2}
			C \cap \big( \{x,y,x_{k+1},\ldots,x_n\} \setminus \langle A \rangle \big) \not= \emptyset.
		\end{equation}
		From \eqref{eq-1}-\eqref{eq-2} (and being $x_i\not=x+y$ if $k+1 \leq i \leq n\}$), it is clear that $C\cap \{x,y\}\not=\emptyset$.
		\item This is a reformulation of the statement in the previous item. \qedhere
	\end{enumerate}	
\end{proof}

\begin{proposition}\label{prop-34}
	If $C\in \mathfrak{m}(A,B)$, then $\mathbb{N} \setminus C \in \mathscr{S}(A,B)$. Moreover, we have that $\langle A \rangle \subseteq \mathbb{N}\setminus C$ and $ (\mathbb{N}\setminus C) \cap B = \emptyset$.
\end{proposition}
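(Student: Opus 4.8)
The plan is to extract everything directly from Lemma~\ref{lem-33}; no new combinatorics should be needed. Write $S=\mathbb{N}\setminus C$. First I would verify that $S$ is a numerical semigroup. Since $C\in\mathfrak{m}(A,B)$, part~\eqref{lem-33-1} of Lemma~\ref{lem-33} gives $B\subseteq C\subseteq\{1,\ldots,\max(B)\}\setminus\langle A\rangle$; in particular $C$ is a finite subset of $\mathbb{N}\setminus\{0\}$. Hence $0\in S$ and $\mathbb{N}\setminus S=C$ is finite. For closure under addition, let $x,y\in S$, that is, $\{x,y\}\subseteq\mathbb{N}\setminus C$; then part~\eqref{lem-33-4} of Lemma~\ref{lem-33} yields $x+y\in\mathbb{N}\setminus C=S$. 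Thus $S$ is a submonoid of $(\mathbb{N},+)$ whose complement is finite, i.e. a numerical semigroup.

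For the remaining assertions I would again appeal to part~\eqref{lem-33-1} of Lemma~\ref{lem-33}. From $C\subseteq\{1,\ldots,\max(B)\}\setminus\langle A\rangle$ we get $C\cap\langle A\rangle=\emptyset$, hence $\langle A\rangle\subseteq\mathbb{N}\setminus C=S$, and in particular $A\subseteq\langle A\rangle\subseteq S$. From $B\subseteq C$ we get $S\cap B=(\mathbb{N}\setminus C)\cap B=\emptyset$. Putting these together, $S$ is a numerical semigroup with $A\subseteq S$ and $S\cap B=\emptyset$, so $S\in\mathscr{S}(A,B)$, and moreover $\langle A\rangle\subseteq S$ and $S\cap B=\emptyset$, which is exactly the statement.

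I do not expect a genuine obstacle here: the substantive work — closure under addition of the complement, and the fact that $C$ stays inside $\{1,\ldots,\max(B)\}$ and misses $\langle A\rangle$ — has already been carried out in Lemma~\ref{lem-33}, and parts~\eqref{lem-33-1} and~\eqref{lem-33-4} there are tailor-made for this proposition. The only point deserving a moment of care is that the finiteness of $\mathbb{N}\setminus S$ rests solely on the bound $C\subseteq\{1,\ldots,\max(B)\}$, so it is worth invoking that bound explicitly before concluding that $S$ is a numerical semigroup rather than merely a submonoid.
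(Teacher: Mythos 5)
Your proposal is correct and follows essentially the same route as the paper: item~1 of Lemma~\ref{lem-33} gives that $C$ is finite, avoids $0$ and $\langle A\rangle$ and contains $B$, while item~4 gives closure of $\mathbb{N}\setminus C$ under addition, which is exactly how the paper argues. No gaps.
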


\begin{proof}
	First of all, by Item~\ref{lem-33-1} of Lemma~\ref{lem-33}, we know that  $\mathbb{N} \setminus C$ is a finite set such that $0 \in \mathbb{N} \setminus C$. Secondly, by Item~\ref{lem-33-4} of Lemma~\ref{lem-33}, we have that $\mathbb{N} \setminus C$ is closed under addition. Therefore, $\mathbb{N} \setminus C$ is a numerical semigroup.
	
	Finally, and again by Item~\ref{lem-33-1} of Lemma~\ref{lem-33}, we can conclude that  $\langle A \rangle \subseteq \mathbb{N} \setminus C$ and $(\mathbb{N} \setminus C) \cap B = \emptyset$.
\end{proof}

\begin{proposition}\label{prop-35}
	If $S$ is a numerical semigroup such that $A\subseteq S$ and $S\cap B=\emptyset$, then $\mathbb{N}\setminus S \in \mathfrak{L}(A,B)$.
\end{proposition}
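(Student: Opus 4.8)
The plan is to verify directly that $K=\mathbb{N}\setminus S$ meets every member of $\mathfrak{G}(A,B)$, so that $K$ qualifies as an element of $\mathfrak{L}(A,B)$. First I would record the trivial fact that $0\in S$, so that $K\subseteq\mathbb{N}\setminus\{0\}$; this is needed merely for $K$ to be eligible for membership in $\mathfrak{L}(A,B)$.

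Next, fix an arbitrary $X\in\mathfrak{G}(A,B)$ and write $X=\{x_1,\ldots,x_n\}\setminus\langle A\rangle$ where $(x_1,\ldots,x_n)\in\mathfrak{P}(B)$, say $x_1+\cdots+x_n=b$ with $b\in B$. I want to show $K\cap X\neq\emptyset$, and I would argue by contradiction: suppose $(\mathbb{N}\setminus S)\cap X=\emptyset$, that is, $X\subseteq S$. The key observation is that $A\subseteq S$ forces $\langle A\rangle\subseteq S$, since $S$ is a submonoid of $(\mathbb{N},+)$. Hence for each index $i$ we have $x_i\in S$: if $x_i\in\langle A\rangle$ this is immediate, and if $x_i\notin\langle A\rangle$ then $x_i\in X\subseteq S$. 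Since $S$ is closed under addition, $b=x_1+\cdots+x_n\in S$, contradicting $b\in B$ together with $S\cap B=\emptyset$.

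Therefore $K\cap X\neq\emptyset$ for every $X\in\mathfrak{G}(A,B)$, which is precisely the statement that $\mathbb{N}\setminus S\in\mathfrak{L}(A,B)$. I do not expect any genuine obstacle here; the only point requiring a little care is to split each summand according to whether it lies in $\langle A\rangle$ and to invoke $\langle A\rangle\subseteq S$, so that the set difference defining $X$ causes no difficulty.
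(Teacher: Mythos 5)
Your proposal is correct and follows essentially the same argument as the paper: both rest on $\langle A\rangle\subseteq S$, the additive closure of $S$, and $b\notin S$ to force some summand $x_i\notin S$ with $x_i\notin\langle A\rangle$, hence $x_i\in K\cap X$. The only difference is that you phrase it as a contradiction (assuming $X\subseteq S$) while the paper argues directly, which is an immaterial change of presentation.
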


\begin{proof}
	If we take $K=\mathbb{N}\setminus S$, then it is clear that $K \subseteq \mathbb{N}\setminus \{0\}$. Thus, to finish the proof, it is enough to see that, if $X\in\mathfrak{G}(A,B)$, then $K\cap X\not=\emptyset$.
	
	Let $X = \{x_1,\ldots,x_n\}\setminus  \langle A \rangle \in \mathfrak{G}(A,B)$. By hypothesis, if $x_1+\cdots+x_n$ is a partition of $b\in B$, then $x_1+\cdots+x_n\not\in S$. Thus, there exists $i\in\{1,\ldots,n\}$ such that $x_i\not\in S$, that is, $x_i\in\mathbb{N}\setminus S=K$. In addition, $x_i\not\in \langle A \rangle$ and, consequently, $x_i\in X$. Thereby, $x_i\in K\cap X$.
\end{proof}

We are now ready to show the result that allows to translate Problem~\ref{prob} into a question on numerical semigroups.

\begin{theorem}\label{thm-36}
	Let $C$ be a subset of $\mathbb{N} \setminus \{0\}$. Then $C\in \mathfrak{m}(A,B)$ if and  only if $\mathbb{N} \setminus C \in \mathscr{M}(A,B)$.
\end{theorem}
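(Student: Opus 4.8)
The plan is to prove the two implications separately, using Propositions~\ref{prop-34} and \ref{prop-35} as the dictionary between the two settings, plus one auxiliary finiteness remark. For the implication $C\in\mathfrak{m}(A,B)\Rightarrow\mathbb{N}\setminus C\in\mathscr{M}(A,B)$, Proposition~\ref{prop-34} already gives $S:=\mathbb{N}\setminus C\in\mathscr{S}(A,B)$, so only maximality remains to be checked. If $T\in\mathscr{S}(A,B)$ satisfies $S\subseteq T$, then $\mathbb{N}\setminus T\subseteq\mathbb{N}\setminus S=C$ and, by Proposition~\ref{prop-35}, $\mathbb{N}\setminus T\in\mathfrak{L}(A,B)$; since $C$ is minimal in $\mathfrak{L}(A,B)$ we get $\mathbb{N}\setminus T=C$, i.e.\ $T=S$. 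Hence $S\in\mathrm{Maximal}\big(\mathscr{S}(A,B)\big)=\mathscr{M}(A,B)$.

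The converse needs the following observation: every $K\in\mathfrak{L}(A,B)$ contains some $C'\in\mathfrak{m}(A,B)$. This is pure finiteness: $B$ is finite, each element of $B$ has finitely many partitions and each partition finitely many summands, so $\mathfrak{G}(A,B)$ is a finite family of finite sets. Choosing one element of $K\cap X$ for every $X\in\mathfrak{G}(A,B)$ produces a \emph{finite} set $K_0\subseteq K$ with $K_0\in\mathfrak{L}(A,B)$; the finite nonempty family $\{L\subseteq K_0\mid L\in\mathfrak{L}(A,B)\}$ then has a subset-minimal element $C'$, and $C'$ is automatically minimal in all of $\mathfrak{L}(A,B)$, since any $M\in\mathfrak{L}(A,B)$ with $M\subseteq C'$ also satisfies $M\subseteq K_0$. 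Hence $C'\in\mathfrak{m}(A,B)$ and $C'\subseteq K$.

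Now assume $\mathbb{N}\setminus C\in\mathscr{M}(A,B)$ and set $S=\mathbb{N}\setminus C$. By Proposition~\ref{prop-35}, $C\in\mathfrak{L}(A,B)$, so by the observation there is $C'\in\mathfrak{m}(A,B)$ with $C'\subseteq C$. By Proposition~\ref{prop-34}, $\mathbb{N}\setminus C'\in\mathscr{S}(A,B)$, and $C'\subseteq C$ gives $S\subseteq\mathbb{N}\setminus C'$; maximality of $S$ then forces $S=\mathbb{N}\setminus C'$, hence $C=C'\in\mathfrak{m}(A,B)$, as desired.

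The two ``minimal/maximal chases'' are routine once the correspondence $K\leftrightarrow\mathbb{N}\setminus K$ is in place, and I expect the only real subtlety---hence the main obstacle---to lie in the converse direction. An arbitrary member of $\mathfrak{L}(A,B)$ need not be the complement of a numerical semigroup (it need be neither finite nor have an addition-closed complement), so one cannot directly turn a hypothetical $K\subsetneq C$ in $\mathfrak{L}(A,B)$ into a numerical semigroup properly containing $S$ and thereby contradict maximality. The finiteness observation is exactly the device that sidesteps this: it lets us descend from such a $K$ to a genuine element of $\mathfrak{m}(A,B)$, which Proposition~\ref{prop-34} does convert into an element of $\mathscr{S}(A,B)$.
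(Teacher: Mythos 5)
Your proof is correct and follows essentially the same route as the paper: the necessity direction is the paper's argument verbatim (Proposition~\ref{prop-34} puts $\mathbb{N}\setminus C$ in $\mathscr{S}(A,B)$, and Proposition~\ref{prop-35} together with minimality of $C$ yields maximality), and the sufficiency direction has the same skeleton. The one genuine difference is your auxiliary finiteness observation, and it is a worthwhile addition. The paper's sufficiency step reads ``assume there exists $D\in\mathfrak{m}(A,B)$ with $D\subseteq C$'' and concludes $C=D$; this establishes $C\in\mathfrak{m}(A,B)$ only if one already knows that every element of $\mathfrak{L}(A,B)$ contains some member of $\mathfrak{m}(A,B)$, a well-foundedness fact the paper leaves tacit. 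Your observation supplies exactly that justification, and it is correct: since $B\cap\langle A\rangle=\emptyset$, every $X\in\mathfrak{G}(A,B)$ is nonempty, $\mathfrak{G}(A,B)$ is a finite family of finite sets, so any $K\in\mathfrak{L}(A,B)$ contains a finite $K_0\in\mathfrak{L}(A,B)$, below which a subset-minimal element exists and is automatically minimal in all of $\mathfrak{L}(A,B)$. You also correctly diagnose why the naive complementation argument fails for an arbitrary $K\subsetneq C$ in $\mathfrak{L}(A,B)$ (its complement need not be a numerical semigroup), which is precisely the subtlety the paper's brevity obscures; in this respect your write-up is slightly more complete than the original.
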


\begin{proof}
	(\emph{Necessity}.) From Proposition~\ref{prop-34}, we know that $\mathbb{N} \setminus C\in \mathscr{S}(A,B)$. Now, let us suppose that there exists $S\in\mathscr{S}(A,B)$ such that $\mathbb{N} \setminus C \subseteq S$. Then, by Proposition~\ref{prop-35}, we have that $\mathbb{N} \setminus S \in\mathfrak{L}(A,B)$. But, since $C\in \mathfrak{m}(A,B)$ and $\mathbb{N} \setminus S \subseteq C$, we have that $C=\mathbb{N} \setminus S$ or, equivalently, that $\mathbb{N} \setminus C=S$. Thus, we conclude that $\mathbb{N} \setminus C \in \mathscr{M}(A,B)$.
	
	(\emph{Sufficiency}.) By applying Proposition~\ref{prop-35}, we have that, if $\mathbb{N} \setminus C \in \mathscr{M}(A,B)$, then $C\in\mathfrak{L}(A,B)$. Now, let us assume that there exists $D\in\mathfrak{m}(A,B)$ such that $D\subseteq C$. Then, $\mathbb{N}\setminus C \subseteq \mathbb{N}\setminus D$ and, by Proposition~\ref{prop-34}, $\mathbb{N}\setminus D \in  \mathscr{S}(A,B)$. Therefore, $\mathbb{N}\setminus C = \mathbb{N}\setminus D$ and, consequently, $C=D$. Thus, $C\in \mathfrak{m}(A,B)$.
\end{proof}

As an immediate consequence of Theorem~\ref{thm-36}, we have the following result.

\begin{corollary}\label{cor-37}
	Let $A$ be a non-empty subset of $\mathbb{N}$ and let $B$ be a non-empty finite subset of $\mathbb{N} \setminus \{0\}$. Then $\mathfrak{m}(A,B)=\left\{ \mathbb{N}\setminus S \mid S \in \mathscr{M}(A,B) \right\}$.
\end{corollary}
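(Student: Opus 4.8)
The plan is to derive Corollary~\ref{cor-37} directly from Theorem~\ref{thm-36} by a routine set-theoretic argument, treating the complementation map $C \mapsto \mathbb{N}\setminus C$ as the bijection that transfers the two families onto each other.

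First I would fix $A$ and $B$ as in the statement and immediately observe that if $B\cap\langle A\rangle\neq\emptyset$, then no $K\in\mathfrak{L}(A,B)$ can exist (the empty set $X=\emptyset\in\mathfrak{G}(A,B)$ arising from a partition of an element of $B$ lying in $\langle A\rangle$ forces $K\cap\emptyset\neq\emptyset$, which is impossible), so $\mathfrak{m}(A,B)=\emptyset$; simultaneously $\mathscr{S}(A,B)=\emptyset$ in this case, as noted in the discussion preceding the excerpt, hence $\mathscr{M}(A,B)=\emptyset$ and the claimed equality reads $\emptyset=\emptyset$. This lets me assume for the rest of the proof that $B\cap\langle A\rangle=\emptyset$, which is precisely the hypothesis under which Theorem~\ref{thm-36} was established.

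Next, under that assumption, I would prove the two inclusions. For $\subseteq$: take $C\in\mathfrak{m}(A,B)$; by Theorem~\ref{thm-36}, $S:=\mathbb{N}\setminus C\in\mathscr{M}(A,B)$, and since $C\subseteq\mathbb{N}\setminus\{0\}$ we have $0\notin C$, so $\mathbb{N}\setminus S=\mathbb{N}\setminus(\mathbb{N}\setminus C)=C$; thus $C=\mathbb{N}\setminus S$ with $S\in\mathscr{M}(A,B)$, i.e.\ $C$ lies in the right-hand set. For $\supseteq$: take $S\in\mathscr{M}(A,B)$ and set $C:=\mathbb{N}\setminus S$; since $S$ is a numerical semigroup, $0\in S$, so $C\subseteq\mathbb{N}\setminus\{0\}$, and $\mathbb{N}\setminus C=S\in\mathscr{M}(A,B)$, whence the sufficiency direction of Theorem~\ref{thm-36} gives $C\in\mathfrak{m}(A,B)$. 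Combining the two inclusions yields the asserted identity.

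I do not expect any genuine obstacle here: the only points requiring a moment's care are (a) checking the degenerate case $B\cap\langle A\rangle\neq\emptyset$ separately, since Theorem~\ref{thm-36} was proved under the standing hypothesis $\langle A\rangle\cap B=\emptyset$, and (b) recording the involutivity of complementation on subsets of $\mathbb{N}$ together with the compatibility of the ambient conditions $C\subseteq\mathbb{N}\setminus\{0\}$ and $0\in S$ — these are exactly what make $C\mapsto\mathbb{N}\setminus C$ a bijection between $\{C\subseteq\mathbb{N}\setminus\{0\}\}$ and $\{S\subseteq\mathbb{N}\mid 0\in S\}$, under which Theorem~\ref{thm-36} identifies $\mathfrak{m}(A,B)$ with $\mathscr{M}(A,B)$.
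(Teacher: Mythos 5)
Your argument is correct and is essentially the paper's: Corollary~\ref{cor-37} is stated there as an immediate consequence of Theorem~\ref{thm-36}, and your two inclusions via the involutive complementation $C\mapsto\mathbb{N}\setminus C$ (using $0\in S$ to ensure $\mathbb{N}\setminus S\subseteq\mathbb{N}\setminus\{0\}$) are exactly that deduction. Your separate treatment of the degenerate case $B\cap\langle A\rangle\neq\emptyset$, where both sides are empty, is a harmless extra precaution not made explicit in the paper, whose notation $\mathfrak{m}(A,B)$ is introduced under the standing hypothesis $\langle A\rangle\cap B=\emptyset$.
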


\section{The set $\mathscr{I}\boldsymbol{(A,F)}$}\label{sect-I(A,F)}

In this section $A$ is a non-empty subset of $\mathbb{N}$ and $F$ is a positive integer.

\begin{proposition}\label{prop-1}
	$\mathscr{S}(A,F)\not=\emptyset$ if and only if $F\notin\langle A \rangle$.
\end{proposition}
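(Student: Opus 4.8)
The plan is to prove the two implications separately, with the reverse direction carried out by an explicit construction of a numerical semigroup.

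For necessity, I would argue in one line: if $S\in\mathscr{S}(A,F)$, then $S$ is a submonoid of $(\mathbb{N},+)$ containing $A$, hence $\langle A\rangle\subseteq S$; and since $\mathrm{F}(S)=F$ we have $F\notin S$, so $F\notin\langle A\rangle$.

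For sufficiency, assume $F\notin\langle A\rangle$ and propose the set
\[ S = \langle A\rangle \cup \{\, x\in\mathbb{N} \mid x>F \,\}, \]
i.e.\ the monoid generated by $A$ with the whole ``tail'' above $F$ adjoined. I would then verify the defining conditions of a numerical semigroup together with the membership requirements. First, $0\in\langle A\rangle\subseteq S$. Second, $\mathbb{N}\setminus S\subseteq\{0,1,\ldots,F\}$ is finite. Third, $S$ is closed under addition, by the case analysis: if $a,b\in S$ both lie in $\langle A\rangle$, then $a+b\in\langle A\rangle\subseteq S$; otherwise at least one of $a,b$ exceeds $F$, and since the other is a non-negative integer, $a+b>F$, so $a+b\in S$. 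Hence $S$ is a numerical semigroup with $A\subseteq\langle A\rangle\subseteq S$. Finally, because $F\notin\langle A\rangle$ and $F\not>F$, we get $F\notin S$, while every integer greater than $F$ lies in $S$; therefore $\mathrm{F}(S)=F$, and $S\in\mathscr{S}(A,F)$.

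I do not expect a genuine obstacle here: this is the standard ``fill in everything above the Frobenius number'' construction. The only points deserving a moment's care are the addition-closure case split above and the observation that adjoining the tail $\{x>F\}$ does not accidentally place $F$ itself in $S$ — which is precisely where the hypothesis $F\notin\langle A\rangle$ enters.
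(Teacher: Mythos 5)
Your proof is correct and follows essentially the same route as the paper: the one-line necessity argument and the construction $S=\langle A\rangle\cup\{x\in\mathbb{N}\mid x>F\}$ for sufficiency, which is exactly the paper's $\langle A\rangle\cup\{F+1,\to\}$; you merely spell out the closure and Frobenius-number verifications that the paper dismisses as clear.
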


\begin{proof}
	(\emph{Necessity}.) If $\mathscr{S}(A,F)\not=\emptyset$, then there exists $S\in\mathscr{S}(A,F)$. Therefore, $\langle A \rangle \subseteq S$ and $F\notin S$. Consequently, $F\notin\langle A \rangle$.
	
	(\emph{Sufficiency}.) If $F\notin\langle A \rangle$, then it is clear that $S=\langle A \rangle \cup \{F+1,\to\}$ (where the symbol $\to$ means that every integer greater than $F+1$ belongs to the set $S$) is a numerical semigroup with Frobenius number equal to $F$. Therefore, $\mathscr{S}(A,F)\not=\emptyset$.
\end{proof}

The next result is \cite[Theorem 1]{pacific}.

\begin{lemma}\label{lem-2}
	If $S$ is a numerical semigroup, then the following conditions are equivalent.
	\begin{enumerate}
		\item $S$ is irreducible.
		\item $S$ is maximal in the set of all numerical semigroups with Frobenius number equal to $\mathrm{F}(S)$.
		\item $S$ is maximal in the set of all numerical semigroups that do not contain $\mathrm{F}(S)$.
	\end{enumerate}
\end{lemma}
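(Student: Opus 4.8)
The plan is to set $F=\mathrm{F}(S)$ (so the statement tacitly assumes $S\neq\mathbb{N}$, since otherwise $\mathrm{F}(S)$ is undefined) and to close the implication loop $(1)\Rightarrow(2)\Rightarrow(3)\Rightarrow(1)$. Two elementary observations carry most of the weight. \emph{First}, if $T$ is a numerical semigroup with $S\subseteq T$, then $\mathbb{N}\setminus T\subseteq\mathbb{N}\setminus S$, so in particular every integer exceeding $F$ already lies in $S\subseteq T$; hence for such a $T$ one has $\mathrm{F}(T)=F$ precisely when $F\notin T$. \emph{Second}, $S\cup\{F\}$ is itself a numerical semigroup: it is cofinite and contains $0$, and it is closed under addition because $F+s>F$ forces $F+s\in S$ for every $s\in S\setminus\{0\}$, while $2F>F$ forces $2F\in S$. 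This second fact is the key gadget.

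For $(1)\Rightarrow(2)$ I would argue contrapositively: if $S$ is not maximal among numerical semigroups with Frobenius number $F$, choose $T\supsetneq S$ with $\mathrm{F}(T)=F$, so that $F\notin T$. Then $S=T\cap(S\cup\{F\})$, because the right-hand side contains $S$, cannot contain $F$ (as $F\notin T$), and contains nothing else since $T\cap S=S$. As $T$ and $S\cup\{F\}$ both strictly contain $S$, this exhibits $S$ as reducible.

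For $(2)\Rightarrow(3)$, suppose $S$ is maximal among numerical semigroups with Frobenius number $F$, and let $T$ be any numerical semigroup with $S\subseteq T$ and $F\notin T$. By the first observation $\mathrm{F}(T)=F$, so maximality forces $T=S$; hence $S$ is maximal among numerical semigroups avoiding $F$. Finally, for $(3)\Rightarrow(1)$, again contrapositively: if $S=S_1\cap S_2$ with $S\subsetneq S_1$ and $S\subsetneq S_2$, then $F\notin S$ forces $F\notin S_1$ (say), so $S_1$ is a numerical semigroup strictly larger than $S$ that avoids $F$, contradicting the maximality of $S$ in that family.

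The only genuinely substantive points — and what I would single out as the main obstacle — are the verification that $S\cup\{F\}$ is a numerical semigroup and the check that the decomposition $S=T\cap(S\cup\{F\})$ is nontrivial on both factors; everything else reduces to bookkeeping with the inclusion $\mathbb{N}\setminus T\subseteq\mathbb{N}\setminus S$ of gap sets.
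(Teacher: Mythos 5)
Your proof is correct and complete: the cofiniteness observation, the verification that $S\cup\{\mathrm{F}(S)\}$ is a numerical semigroup, and the decomposition $S=T\cap\bigl(S\cup\{\mathrm{F}(S)\}\bigr)$ together close the cycle $(1)\Rightarrow(2)\Rightarrow(3)\Rightarrow(1)$ without gaps. Note that the paper itself gives no proof of this lemma, quoting it as Theorem~1 of the cited Rosales--Branco article; your argument is essentially the standard one from that source, with $S\cup\{\mathrm{F}(S)\}$ as the key gadget, so there is nothing to flag beyond the (correctly noted) tacit assumption $S\neq\mathbb{N}$ so that $\mathrm{F}(S)\geq 1$.
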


Let us observe that, if $S$ is a numerical semigroup, then $\mathbb{N} \setminus S$ is a finite set. Therefore, there exists a finite number of numerical semigroups containing $S$. This fact, together with Proposition~\ref{prop-1} and Lemma~\ref{lem-2}, allows us to state the next result.

\begin{proposition}\label{prop-3}
	$\mathscr{I}(A,F)\not=\emptyset$ if and only if $F\notin\langle A \rangle$.
\end{proposition}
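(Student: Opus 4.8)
The plan is to mimic the structure of Proposition~\ref{prop-1}, proving the two implications separately; irreducibility only enters in the sufficiency direction. For \emph{necessity}, note that $\mathscr{I}(A,F)\subseteq\mathscr{S}(A,F)$ holds by definition, so if $\mathscr{I}(A,F)\neq\emptyset$ then $\mathscr{S}(A,F)\neq\emptyset$, and Proposition~\ref{prop-1} immediately yields $F\notin\langle A\rangle$.

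For \emph{sufficiency}, assume $F\notin\langle A\rangle$. By Proposition~\ref{prop-1} we may fix some $S\in\mathscr{S}(A,F)$. Consider the family
\[ \mathcal{T}=\{\,T\in\mathscr{S}\mid S\subseteq T \text{ and } \mathrm{F}(T)=F\,\}. \]
It is nonempty (it contains $S$) and finite: for any $T\supseteq S$ we have $\mathbb{N}\setminus T\subseteq\mathbb{N}\setminus S$, and $\mathbb{N}\setminus S$ is finite, so there are only finitely many such $T$. Hence $\mathcal{T}$ has a maximal element $\bar S$ with respect to inclusion. Since $A\subseteq S\subseteq\bar S$ and $\mathrm{F}(\bar S)=F$, we already have $\bar S\in\mathscr{S}(A,F)$; it remains to check that $\bar S$ is irreducible.

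The key step is to upgrade ``maximal in $\mathcal{T}$'' to ``maximal among \emph{all} numerical semigroups with Frobenius number $F$''. Indeed, if $T$ is a numerical semigroup with $\bar S\subsetneq T$ and $\mathrm{F}(T)=\mathrm{F}(\bar S)=F$, then $S\subseteq\bar S\subsetneq T$, so $T\in\mathcal{T}$, contradicting the maximality of $\bar S$ in $\mathcal{T}$. Thus $\bar S$ is maximal in the set of numerical semigroups with Frobenius number $F$, and by the equivalence of conditions (1) and (2) in Lemma~\ref{lem-2} it is irreducible. Therefore $\bar S\in\mathscr{I}(A,F)$, so $\mathscr{I}(A,F)\neq\emptyset$, completing the proof. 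The only mildly delicate point is precisely this passage from maximality within $\mathcal{T}$ to maximality in the full family, which works because the property ``$\supseteq S$'' is preserved by enlarging the semigroup; the rest is routine bookkeeping, and it is exactly what the paragraph preceding the statement anticipates when it invokes the finiteness of the set of semigroups containing $S$ together with Proposition~\ref{prop-1} and Lemma~\ref{lem-2}.
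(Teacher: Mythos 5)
Your proof is correct and follows essentially the same route the paper intends: necessity via $\mathscr{I}(A,F)\subseteq\mathscr{S}(A,F)$ and Proposition~\ref{prop-1}, and sufficiency by picking a maximal element among the finitely many numerical semigroups containing a fixed $S\in\mathscr{S}(A,F)$ with Frobenius number $F$, then invoking Lemma~\ref{lem-2}. The explicit ``upgrade'' step from maximality in $\mathcal{T}$ to maximality among all numerical semigroups with Frobenius number $F$ is exactly the detail the paper leaves implicit, and you handle it correctly.
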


The following result is \cite[Lemma 4]{computer}.

\begin{lemma}\label{lem-4}
	If $S$ is a numerical semigroup with Frobenius number equal to $F$, then:
	\begin{enumerate}
		\item If $h = \max \left\{ x\in\mathbb{N}\setminus S \mid F-x\notin S \mbox{ and } x\not=\frac{F}{2} \right\}$, then $S\cup\{h\}$ is a numerical semigroup with Frobenius number equal to $F$.
		
		\item $S$ is irreducible if and only if $\left\{ x\in\mathbb{N}\setminus S \mid F-x\notin S \mbox{ and } x\not=\frac{F}{2} \right\} = \emptyset$.
	\end{enumerate}
\end{lemma}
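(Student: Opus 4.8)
The plan is to prove item~(1) by hand and then derive item~(2) from it via the maximality characterisation of irreducibility in Lemma~\ref{lem-2}. Write $T=\{x\in\mathbb{N}\setminus S\mid F-x\notin S \text{ and } x\neq F/2\}$, so that item~(1) tacitly assumes $T\neq\emptyset$ and $h=\max T$. I would first record the cheap observations used throughout: since $0\in S$ we have $h\geq 1$; since $F-h\notin S$ we cannot have $F-h=0$, so $h\neq F$; hence $F\notin S\cup\{h\}$, and every integer larger than $F$ already lies in $S$. Consequently, once $S\cup\{h\}$ is shown to be closed under addition, it is automatically a numerical semigroup with Frobenius number $F$, so the whole of item~(1) reduces to proving closure.

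For closure I would proceed in two steps. Step A: $s+h\in S$ for every $s\in S\setminus\{0\}$. Suppose not; then $s+h\notin S$ and $s+h>h$, so $s+h\leq F$, and in fact $s+h<F$, because $s+h=F$ would give $F-h=s\in S$. Maximality of $h$ in $T$ then forces $s+h\notin T$, i.e.\ $F-(s+h)\in S$ or $s+h=F/2$. In the first case $F-h=s+\bigl(F-(s+h)\bigr)$ is a sum of two elements of $S$, contradicting $F-h\notin S$. In the second case set $x=F-h=F/2+s$; then $x\notin S$, $x\neq F/2$ (as $s\neq 0$), and $F-x=h\notin S$, so $x\in T$ and hence $x\leq h=F/2-s$, which is impossible since $s>0$. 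Step B: $2h\in S$. Again suppose not; arguing as before $2h<F$ (since $2h=F$ would mean $h=F/2$), so $F-2h\in S$ or $2h=F/2$. If $F-2h=t\in S$ then $F-h=h+t$ with $t\in S$; if $t=0$ then $2h=F$, excluded, and if $t\neq 0$ then $h+t\in S$ by Step~A, contradicting $F-h\notin S$. If $2h=F/2$, i.e.\ $F=4h$, then $x=F-h=3h$ satisfies $x\notin S$, $x\neq 2h=F/2$, $F-x=h\notin S$, so $x\in T$ and $3h\leq h$, absurd. Combining Steps~A and~B (together with the trivial cases involving $0$ or two elements of $S$) gives closure of $S\cup\{h\}$, completing item~(1).

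For item~(2), the implication ``$T=\emptyset\Rightarrow S$ irreducible'' I would prove through part~(3) of Lemma~\ref{lem-2}: if $S\subsetneq S'$ with $S'$ a numerical semigroup and $F\notin S'$, pick $x\in S'\setminus S$; then $x<F$ (as $x\notin S$ and $x\neq F$), $F-x\notin S$ (else $F=x+(F-x)\in S'$), and $x\neq F/2$ (else $F=2x\in S'$), so $x\in T=\emptyset$, a contradiction; hence $S$ is maximal among numerical semigroups not containing $F$ and is therefore irreducible. Conversely, if $T\neq\emptyset$ then item~(1) produces the numerical semigroup $S\cup\{h\}$, which has Frobenius number $F$ and strictly contains $S$ (since $h\notin S$), so $S$ is not maximal among numerical semigroups with Frobenius number $F$ and, by Lemma~\ref{lem-2}, not irreducible.

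The main obstacle is the closure argument in item~(1), and within it the subcases where the problematic sum equals $F/2$: there the maximality of $h$ cannot be applied directly to that sum, and the key move is to transfer the contradiction to the complementary element $F-h$, which one checks lies in $T$ and exceeds $h$. The bookkeeping that $h\geq 1$ and $h\neq F$, though elementary, is also needed to exclude degenerate situations and should be stated up front.
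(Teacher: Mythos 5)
Your proof is correct. One point of comparison is that the paper itself offers no proof of this lemma: it is imported verbatim as Lemma~4 of \cite{computer}, so there is no internal argument to measure yours against, and what you have written is a complete, self-contained justification of a quoted result. Your route is essentially the standard one behind that citation: reduce item~(1) to closure under addition (after the cheap observations $h\geq 1$, $h\neq F$, and that every integer greater than $F$ lies in $S$, which indeed make the Frobenius-number claim automatic), verify the only two nontrivial sums $s+h$ with $s\in S\setminus\{0\}$ and $2h$ by playing the maximality of $h$ against the complementary gap $F-h$, and then deduce item~(2) from the maximality characterisations of Lemma~\ref{lem-2}: part~(3) for the implication from $\left\{ x\in\mathbb{N}\setminus S \mid F-x\notin S, \ x\not=\frac{F}{2} \right\}=\emptyset$ to irreducibility, and part~(2), via the unitary extension $S\cup\{h\}$ furnished by item~(1), for the converse. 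The delicate subcases are exactly the ones you isolate, where the offending sum equals $\frac{F}{2}$ (possible only for even $F$) and maximality cannot be applied to the sum itself; your device of transferring the contradiction to $x=F-h$, checking that $x$ lies in the defining set and strictly exceeds $h$, settles both of them correctly ($x=\frac{F}{2}+s>\frac{F}{2}-s=h$ in Step~A, and $3h>h$ when $F=4h$ in Step~B). All the needed side conditions ($F-h\notin S$, $F-2h>0$, $t\neq 0$ before invoking Step~A) are in place, so the argument stands as written.
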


From the above lemma, we can easily deduce the next result.

\begin{lemma}\label{lem-5}
	If $S$ is a numerical semigroup with Frobenius number equal to $F$, then
	\[ \Delta(S) = S \cup \left\{ x\in\mathbb{N}\setminus S \mid F-x\notin S \mbox{ and } x>\frac{F}{2} \right\} \]
	is an irreducible numerical semigroup with Frobenius number equal to $F$.
\end{lemma}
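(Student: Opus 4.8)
The plan is to iterate the one-step enlargement furnished by Lemma~\ref{lem-4}(1) until an irreducible semigroup is reached, and then to check that the semigroup so produced is precisely $\Delta(S)$.

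First I would fix notation and record a symmetry observation. For a numerical semigroup $T$ with $\mathrm{F}(T)=F$, write
\[ \Lambda(T) = \left\{ x\in\mathbb{N}\setminus T \mid F-x\notin T \text{ and } x\neq\tfrac{F}{2} \right\}, \]
so that Lemma~\ref{lem-4} reads: if $\Lambda(T)\neq\emptyset$ then $T\cup\{\max\Lambda(T)\}$ is again a numerical semigroup with Frobenius number $F$, and $T$ is irreducible if and only if $\Lambda(T)=\emptyset$. If $x\in\Lambda(T)$ then $0<x<F$ (since $0\in T$ gives $x\neq 0$, and $F-x\notin T$ together with $0\in T$ gives $x\neq F$, while $x\leq F$ as $\mathrm{F}(T)=F$), and then $F-x$ satisfies the three defining conditions of $\Lambda(T)$ as well, so $F-x\in\Lambda(T)$. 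Hence $\Lambda(T)$ is invariant under $x\mapsto F-x$; in particular, whenever $\Lambda(T)\neq\emptyset$ it contains an element exceeding $F/2$, so $\max\Lambda(T)>F/2$.

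Then I would run the iteration. Put $S_0=S$ and, as long as $\Lambda(S_i)\neq\emptyset$, set $h_i=\max\Lambda(S_i)$ and $S_{i+1}=S_i\cup\{h_i\}$. By Lemma~\ref{lem-4}(1) each $S_i$ is a numerical semigroup with $\mathrm{F}(S_i)=F$, and $\mathrm{g}(S_{i+1})<\mathrm{g}(S_i)$, so after finitely many steps we reach $\bar S=S_k$ with $\Lambda(\bar S)=\emptyset$; by Lemma~\ref{lem-4}(2), $\bar S$ is irreducible with Frobenius number $F$. It remains to prove $\bar S=\Delta(S)$, where I write $D=\{x\in\mathbb{N}\setminus S\mid F-x\notin S\text{ and }x>F/2\}$, so that $\Delta(S)=S\cup D$. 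For $\bar S\subseteq\Delta(S)$: each adjoined $h_i$ lies in $\Lambda(S_i)$, so $h_i\notin S_i$ and $F-h_i\notin S_i$; since $S\subseteq S_i$ this gives $h_i\notin S$ and $F-h_i\notin S$, and $h_i=\max\Lambda(S_i)>F/2$ by the symmetry observation, whence $h_i\in D$. Thus $\bar S=S\cup\{h_0,\dots,h_{k-1}\}\subseteq S\cup D=\Delta(S)$. For $\Delta(S)\subseteq\bar S$: take $y\in D$ and suppose $y\notin\bar S$; then $y\in\mathbb{N}\setminus\bar S$, $y>F/2$, so $F-y<F/2$. Every integer adjoined during the iteration exceeds $F/2$, so $F-y$ was never adjoined, and as $F-y\notin S$ we get $F-y\notin\bar S$; since also $y\neq F/2$, we conclude $y\in\Lambda(\bar S)=\emptyset$, a contradiction. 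Hence $y\in\bar S$, so $\Delta(S)\subseteq\bar S$, and therefore $\Delta(S)=\bar S$ is an irreducible numerical semigroup with Frobenius number $F$.

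The step I expect to require the most care is the identification $\bar S=\Delta(S)$: a priori the greedy enlargement might stop at some irreducible semigroup lying strictly between $\Delta(S)$ and an even larger symmetric/pseudo-symmetric semigroup, and the reason it does not is exactly the invariance of $\Lambda(\cdot)$ under $x\mapsto F-x$. That invariance forces the process to adjoin only integers greater than $F/2$; consequently, for every $y\in D$ its mirror $F-y$ (which is below $F/2$ and not in $S$) stays outside $\bar S$, which in turn forces $y$ into $\bar S$ — pinning down the set of adjoined elements to be exactly $D$.
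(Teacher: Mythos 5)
Your argument is correct, and it is exactly the deduction the paper has in mind: the paper omits the proof, stating only that the lemma ``can easily be deduced'' from Lemma~\ref{lem-4}, and your greedy iteration of Lemma~\ref{lem-4}(1) is the standard way to carry this out. The two points that need care --- the symmetry of the set $\left\{ x\in\mathbb{N}\setminus T \mid F-x\notin T,\ x\neq\frac{F}{2} \right\}$ under $x\mapsto F-x$ (so only elements greater than $\frac{F}{2}$ are ever adjoined) and the resulting identification of the terminal semigroup with $\Delta(S)$ --- are both handled correctly.
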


From the proof of Proposition~\ref{prop-1}, we know that, if $F\notin\langle A \rangle$, then $\langle A \rangle \cup \{F+1,\to\} \in \mathscr{S}(A,F)$. Now, by applying Lemma~\ref{lem-5}, we have that $\mathrm{C}(A,F)=\Delta(\langle A \rangle \cup \{F+1,\to\}) \in \mathscr{I}(A,F)$.

The following result says that $\mathrm{C}(A,F)$ is the unique element of $\mathscr{I}(A,F)$ in which all its minimal generators less than $\frac{F}{2}$ belong to $A$.

\begin{proposition}\label{prop-8}
	If $F\notin\langle A \rangle$, then
	\[ \Big\{ S\in\mathscr{I}(A,F) \mid \left\{ x\in\mathrm{msg}(S) \mid {\textstyle x<\frac{F}{2}} \right\} \subseteq A \Big\} = \{\mathrm{C}(A,F)\}. \]
\end{proposition}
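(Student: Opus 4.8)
The plan is to prove the set equality by establishing the two inclusions, writing throughout $T_0=\langle A\rangle\cup\{F+1,\to\}$ so that $\mathrm{C}(A,F)=\Delta(T_0)$, and using the standard fact (a consequence of Lemma~\ref{lem-7}) that $\mathrm{msg}(S)$ consists exactly of those nonzero elements of a numerical semigroup $S$ that are not a sum of two nonzero elements of $S$.

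For the inclusion $\supseteq$, I would use that $\mathrm{C}(A,F)\in\mathscr{I}(A,F)$ is already known, so it remains to check the membership condition: every $x\in\mathrm{msg}(\mathrm{C}(A,F))$ with $x<\frac F2$ lies in $A$. The key remark is that the elements $\Delta$ adjoins to $T_0$ are all strictly larger than $\frac F2$, so any element of $\mathrm{C}(A,F)$ which is $\le\frac F2$ already lies in $T_0$, and being $<F+1$ it in fact lies in $\langle A\rangle$; hence such an $x$ is a positive element of $\langle A\rangle$. If $x\notin A$ (note $A\ne\{0\}$ here, since $\langle A\rangle$ has a positive element), then every representation of $x$ over $A\setminus\{0\}$ has at least two summands, so $x=a+y$ with $a,y$ nonzero elements of $\langle A\rangle\subseteq T_0\subseteq\mathrm{C}(A,F)$, contradicting that $x$ is a minimal generator. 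Therefore $x\in A$.

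For the inclusion $\subseteq$, take $S\in\mathscr{I}(A,F)$ with $\{x\in\mathrm{msg}(S)\mid x<\frac F2\}\subseteq A$. From $A\subseteq S$ we get $\langle A\rangle\subseteq S$, and from $\mathrm{F}(S)=F$ we get $\{F+1,\to\}\subseteq S$; hence $T_0\subseteq S$. The core of the argument is to show $\mathrm{C}(A,F)=\Delta(T_0)\subseteq S$, i.e. that every $x$ with $x\notin T_0$, $F-x\notin T_0$ and $x>\frac F2$ lies in $S$. Assuming for contradiction that such an $x$ is not in $S$: since $x\ne\frac F2$, Lemma~\ref{lem-4}(2) applied to the irreducible semigroup $S$ (with Frobenius number $F$) forces $F-x\in S$. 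Moreover $x\le F$ (every integer $>F$ lies in $T_0$), and $x\ne F$ because $F-x\notin T_0$ rules out $F-x=0$; thus $0<F-x<\frac F2$, and $F-x\notin T_0$ gives in particular $F-x\notin\langle A\rangle$. Expanding $F-x\in S\setminus\{0\}$ over $\mathrm{msg}(S)$, every minimal generator occurring is $\le F-x<\frac F2$, hence lies in $A$, so $F-x\in\langle A\rangle$ — a contradiction. Thus $\mathrm{C}(A,F)\subseteq S$; since $S$ and $\mathrm{C}(A,F)$ are both irreducible with Frobenius number $F$, Lemma~\ref{lem-2} (maximality of irreducible semigroups among those with a fixed Frobenius number) yields $S=\mathrm{C}(A,F)$.

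I expect the main obstacle to be precisely that core step of the second inclusion: the reason a minimal generator of $S$ lying below $\frac F2$ is forced into $A$ is not transparent, and it depends on combining the irreducibility of $S$ (via Lemma~\ref{lem-4}) with the defining feature of the $\Delta$-construction, namely $F-x\notin T_0$, which is exactly what keeps $F-x$ out of $\langle A\rangle$ and produces the contradiction. A secondary, purely bookkeeping point is the careful handling of $\frac F2$, the excluded value $x=F$ (ruled out because $0\in T_0$), and the vacuous small-$F$ cases.
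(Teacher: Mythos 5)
Your proof is correct and follows essentially the same route as the paper: both directions rest on the observations that every element of the relevant semigroup below $\frac{F}{2}$ decomposes over minimal generators lying in $A$ (hence lies in $\langle A \rangle$), and that irreducibility via Lemma~\ref{lem-4}(2) forces $F-x\in S$ for the elements $x>\frac{F}{2}$ adjoined by $\Delta$, producing the same contradiction the paper uses. The only cosmetic difference is that you finish by combining $\mathrm{C}(A,F)\subseteq S$ with maximality (Lemma~\ref{lem-2}), whereas the paper first writes $S=\left(\langle A\rangle\cup\{F+1,\to\}\right)\cup X$ with $X\subseteq\left\{x\in\mathbb{N}\mid \frac{F}{2}<x<F\right\}$ and then identifies $S$ with $\mathrm{C}(A,F)$ directly.
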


\begin{proof}
	If $x \in \mathrm{C}(A,F)$ and $x<\frac{F}{2}$, then $x\in\langle A \rangle$. Since $\mathrm{C}(A,F) \in \mathscr{I}(A,F)$, by applying Lemma~\ref{lem-7}, we have that
	\[ \{\mathrm{C}(A,F)\} \subseteq \Big\{ S\in\mathscr{I}(A,F) \mid \left\{ x\in\mathrm{msg}(S) \mid {\textstyle x<\frac{F}{2}} \right\} \subseteq A \Big\}. \]
	
	If $S\in\mathscr{I}(A,F)$ and $\left\{ x\in\mathrm{msg}(S) \mid x<\frac{F}{2} \right\} \subseteq A$, then we deduce from Lemma~\ref{lem-7} that $S=\left( \langle A \rangle \cup \{F+1,\to\} \right) \cup X$ where $X\subseteq\{ x\in\mathbb{N} \mid \frac{F}{2} < x < F \}$. Now, by applying item 2 of Lemma~\ref{lem-4}, we easily get that $S=\mathrm{C}(A,F)$.
\end{proof}

The next result is well known (for instance, see \cite{springer}).

\begin{lemma}\label{lem-9}
	If $S$ is a numerical semigroup and $x\in S$, then $S\setminus \{x\}$ is a numerical semigroup if and only if $x\in\mathrm{msg}(S)$.
\end{lemma}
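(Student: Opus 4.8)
The plan is to funnel both implications through one auxiliary combinatorial fact about minimal generators: for a numerical semigroup $S$ and $x \in S \setminus \{0\}$, one has $x \in \mathrm{msg}(S)$ if and only if $x$ cannot be written as $s+t$ with $s,t \in S \setminus \{0\}$. I would prove this claim first. If $x = s+t$ with $s,t \in S \setminus \{0\}$, then, writing $s$ and $t$ as non-negative integer combinations of $\mathrm{msg}(S)$ (each using at least one generator with a positive coefficient since $s,t \neq 0$), we express $x$ as a combination of $\mathrm{msg}(S)$ of total multiplicity at least $2$; were $x$ a minimal generator $n_j$, this would force either $n_j < n_j$ (if $n_j$ occurs with positive coefficient together with something else positive) or $n_j \in \langle \mathrm{msg}(S) \setminus \{n_j\} \rangle$, contradicting the minimality part of Lemma~\ref{lem-6}. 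Conversely, if $x \in S \setminus \{0\}$ is not a minimal generator, an expression of $x$ over $\mathrm{msg}(S)$ has total multiplicity at least $2$ (it cannot be $0$, and multiplicity $1$ would make $x$ one of the $n_i$), so $x = n_i + (x - n_i)$ with both summands in $S \setminus \{0\}$.

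Granting the claim, the lemma is immediate. For sufficiency ($x \in \mathrm{msg}(S) \Rightarrow S \setminus \{x\}$ is a numerical semigroup): since $x \in \mathrm{msg}(S)$ we have $x \neq 0$, so $0 \in S \setminus \{x\}$, and $\mathbb{N} \setminus (S \setminus \{x\}) = (\mathbb{N} \setminus S) \cup \{x\}$ is finite; for closure under addition, if $a,b \in S \setminus \{x\}$ had $a+b = x$, then by the claim one of $a,b$ must be $0$, whence the other equals $x$, a contradiction, so $a+b \in S \setminus \{x\}$. For necessity ($S \setminus \{x\}$ a numerical semigroup $\Rightarrow x \in \mathrm{msg}(S)$): argue by contraposition. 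If $x = 0$, then $S \setminus \{x\}$ lacks the identity, so it is not a numerical semigroup. If $x \in S \setminus \{0\}$ is not a minimal generator, then by the claim $x = s+t$ with $s,t \in S \setminus \{0\}$; since the other summand is positive, both $s$ and $t$ differ from $x$, so $s,t \in S \setminus \{x\}$ while $s+t = x \notin S \setminus \{x\}$, contradicting closure under addition.

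The only step requiring any care is the auxiliary claim, and within it the implication that an element which is a sum of two positive elements of $S$ cannot be a minimal generator; this is where the uniqueness and minimality of $\mathrm{msg}(S)$ (Lemma~\ref{lem-6}, or equivalently the criterion in Lemma~\ref{lem-7}) is genuinely used, rather than a one-line manipulation. Everything else amounts to unwinding the definition of a numerical semigroup (contains $0$, has finite complement in $\mathbb{N}$, closed under addition) and bookkeeping the edge case $x = 0$.
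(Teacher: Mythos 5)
Your proof is correct. Note that the paper itself gives no argument for this lemma: it is quoted as well known with a reference to \cite{springer}, and your route via the auxiliary characterization $\mathrm{msg}(S)=(S\setminus\{0\})\setminus\bigl((S\setminus\{0\})+(S\setminus\{0\})\bigr)$ is exactly the standard textbook argument found there, with the edge case $x=0$ and the finiteness/identity conditions handled properly.
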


Let $\mathbb{Z}$ and $\mathbb{Q}$ be the sets of integer and rational numbers, respectively. If $q\in\mathbb{Q}$, we denote by $\lceil q \rceil = \min\{z\in \mathbb{Z} \mid q\leq z \}$. The following result can be deduced from \cite[Lemma 2.4]{forum}.

\begin{lemma}\label{lem-10}
	If $S$ is a numerical semigroup, then $S$ is irreducible if and only if $\mathrm{g}(S) = \left\lceil \frac{\mathrm{F}(S)+1}{2} \right\rceil$.
\end{lemma}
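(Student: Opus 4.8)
The plan is to derive the genus formula directly from the irreducibility criterion already recorded in Lemma~\ref{lem-4}, by an elementary counting argument based on the involution $x \mapsto \mathrm{F}(S)-x$ on the block of integers $\{0,1,\ldots,\mathrm{F}(S)\}$.

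Set $F = \mathrm{F}(S)$; we may assume $S \neq \mathbb{N}$, so $F \geq 1$. The first observations concern only the genus: every gap of $S$ is a positive integer not exceeding $F$, and $0 \in S$, so the set $\{0,1,\ldots,F\}$ is the disjoint union of the $\mathrm{g}(S)$ gaps of $S$ and the $F+1-\mathrm{g}(S)$ elements of $S$ that do not exceed $F$. Moreover, for any $x$ with $0 \leq x \leq F$ one cannot have both $x \in S$ and $F-x \in S$, since otherwise $F = x+(F-x) \in S$; hence in every two-element set $\{x,F-x\}$ at least one element is a gap. On the other hand, item~2 of Lemma~\ref{lem-4} says exactly that $S$ is irreducible if and only if there is no gap $x$ with $F-x$ also a gap and $x \neq \frac{F}{2}$; equivalently, $S$ is irreducible if and only if no two-element set $\{x,F-x\}$ consists of two gaps, that is, each such set contains precisely one gap.

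Next I would split according to the parity of $F$ and count. If $F$ is odd, the map $x \mapsto F-x$ has no fixed point on $\{0,1,\ldots,F\}$, so this set decomposes into exactly $\frac{F+1}{2}$ two-element sets $\{x,F-x\}$; each contributes at least one gap, so $\mathrm{g}(S) \geq \frac{F+1}{2} = \left\lceil \frac{F+1}{2} \right\rceil$, and equality holds if and only if every such set contributes exactly one gap, which by the preceding paragraph is equivalent to $S$ being irreducible. If $F$ is even, then $\frac{F}{2}$ is always a gap (since $\frac{F}{2}+\frac{F}{2} = F \notin S$), and $\{0,1,\ldots,F\}\setminus\{\frac{F}{2}\}$ decomposes into $\frac{F}{2}$ two-element sets $\{x,F-x\}$ with $x \neq F-x$; counting the forced gap $\frac{F}{2}$ together with at least one gap from each of these sets gives $\mathrm{g}(S) \geq \frac{F}{2}+1 = \left\lceil \frac{F+1}{2} \right\rceil$, again with equality if and only if each two-element set contributes exactly one gap, i.e.\ if and only if $S$ is irreducible. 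Combining the two parity cases proves the equivalence.

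The step that needs the most care is aligning the side condition $x \neq \frac{F}{2}$ of Lemma~\ref{lem-4} with the combinatorics: when $F$ is even the integer $\frac{F}{2}$ must be separated out as a gap that is guaranteed and never paired, whereas when $F$ is odd it plays no role; in each parity one must verify that the extremal case $\mathrm{g}(S) = \lceil \frac{F+1}{2}\rceil$ of the resulting inequality corresponds exactly to the emptiness of the set in item~2 of Lemma~\ref{lem-4}, and in particular that equality cannot hold without it. Everything else — that all gaps lie in $\{1,\ldots,F\}$ and that $x$ and $F-x$ cannot both lie in $S$ — is immediate from the definitions. (One could alternatively invoke the classical genus formulas for symmetric and pseudo-symmetric numerical semigroups, using that irreducible means symmetric or pseudo-symmetric, but the counting argument above stays self-contained within the present paper.)
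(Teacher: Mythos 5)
Your proof is correct. Note, however, that the paper does not actually prove Lemma~\ref{lem-10} at all: it simply states that the result "can be deduced from \cite[Lemma 2.4]{forum}". Your argument therefore supplies something the paper leaves to a reference, and it does so staying inside the paper's toolkit: you take item~2 of Lemma~\ref{lem-4} as the characterisation of irreducibility and convert it into the genus formula by the pairing $x \mapsto F-x$ on $\{0,1,\ldots,F\}$. All the steps check out: every gap lies in $\{1,\ldots,F\}$; $x$ and $F-x$ cannot both lie in $S$ (else $F \in S$), so each pair $\{x,F-x\}$ with $x \neq F-x$ contains at least one gap; the set in Lemma~\ref{lem-4}(2) is empty precisely when no such pair consists of two gaps; and the parity split (with $\frac{F}{2}$ forced to be a gap when $F$ is even) gives $\mathrm{g}(S) \geq \left\lceil \frac{F+1}{2} \right\rceil$ with equality exactly in the irreducible case. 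This is essentially the classical argument behind the symmetric/pseudo-symmetric genus formulas, so what you prove en route (the lower bound $\mathrm{g}(S) \geq \left\lceil \frac{\mathrm{F}(S)+1}{2} \right\rceil$ for every numerical semigroup) is a known bonus, and your version has the advantage of being self-contained relative to the paper. The only cosmetic point is the disposal of $S=\mathbb{N}$: the lemma as stated presupposes a Frobenius number in the sense of the paper (a greatest integer outside $S$), and in the sections where it is used one has $F \geq 1$, so your reduction "we may assume $S \neq \mathbb{N}$" is harmless, but you could say explicitly that the case $S=\mathbb{N}$ is either excluded by convention or trivial under the convention $\mathrm{F}(\mathbb{N})=-1$, $\mathrm{g}(\mathbb{N})=0$.
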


Note that, as a consequence of Proposition~\ref{prop-8}, if $S\in \mathscr{I}(A,F)\setminus \{\mathrm{C}(A,F)\}$, then there exists $\alpha(S)=\min\left\{x\in\mathrm{msg}(S) \mid x\notin A \mbox{ and } x<\frac{F}{2} \right\}$.

\begin{proposition}\label{prop-11}
	If $S\in \mathscr{I}(A,F)\setminus \{\mathrm{C}(A,F)\}$, then $\bar{S}=\left( S\setminus\{\alpha(S)\} \right) \cup \{F-\alpha(S)\} \in \mathscr{I}(A,F)$. 
\end{proposition}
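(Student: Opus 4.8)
The plan is to verify directly, one by one, the four conditions defining membership in $\mathscr{I}(A,F)$ for $\bar{S}$: that $\bar{S}$ is a numerical semigroup, that $\mathrm{F}(\bar{S})=F$, that $A\subseteq\bar{S}$, and that $\bar{S}$ is irreducible. Throughout I write $\alpha=\alpha(S)$, so that $\alpha\in\mathrm{msg}(S)$ and $\alpha<\frac{F}{2}$; in particular $\alpha\neq\frac{F}{2}$ and $F-\alpha>\frac{F}{2}>\alpha>0$. First I would record the basic observation that $F-\alpha\notin S$: otherwise $F=\alpha+(F-\alpha)$ would be a sum of two elements of $S$, contradicting $\mathrm{F}(S)=F$. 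Hence $F-\alpha\notin S\setminus\{\alpha\}$, so the adjunction in the definition of $\bar{S}$ is genuine.

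The heart of the proof is to show that $\bar{S}$ is closed under addition ($0\in\bar{S}$ and the finiteness of $\mathbb{N}\setminus\bar{S}$ being clear, and $S\setminus\{\alpha\}$ being a numerical semigroup by Lemma~\ref{lem-9} since $\alpha\in\mathrm{msg}(S)$). Given $x,y\in\bar{S}$, I would split into cases according to how often $F-\alpha$ occurs. If $x,y\in S\setminus\{\alpha\}$, then $x+y\in S$ and $x+y\neq\alpha$ (a minimal generator is not a sum of two nonzero elements of $S$, and the decomposition $0+\alpha$ is excluded since $\alpha\notin S\setminus\{\alpha\}$), so $x+y\in S\setminus\{\alpha\}\subseteq\bar{S}$. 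If $x=y=F-\alpha$, then $x+y=2(F-\alpha)>F>\alpha$, hence $x+y\in S\setminus\{\alpha\}$. In the remaining case $x=F-\alpha$, $y\in S\setminus\{\alpha\}$: it is immediate when $y=0$, and when $y>\alpha$ we have $x+y>F>\alpha$, so $x+y\in S\setminus\{\alpha\}$. The crux is $0<y<\alpha$: here I would argue that $\alpha-y\notin S$ — otherwise $\alpha=y+(\alpha-y)$ would write the minimal generator $\alpha$ as a sum of two nonzero elements of $S$ — and then invoke the irreducibility of $S$ via item~2 of Lemma~\ref{lem-4}, applied to $\alpha-y\in\mathbb{N}\setminus S$ (note $\alpha-y<\frac{F}{2}$), to obtain $(F-\alpha)+y=F-(\alpha-y)\in S$; since $(F-\alpha)+y>F>\alpha$, this element lies in $S\setminus\{\alpha\}\subseteq\bar{S}$. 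I expect this subcase, where one must combine the minimality of $\alpha$ with the irreducibility of $S$, to be the main obstacle; everything else is bookkeeping.

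Finally I would dispatch the three remaining conditions. Since $0<\alpha<F$ and $F-\alpha\neq F$, passing from $S$ to $\bar{S}$ changes nothing at $F$ or above it, so $F\notin\bar{S}$ while $\{F+1,\to\}\subseteq\bar{S}$, giving $\mathrm{F}(\bar{S})=F$. Since $A\subseteq S$ and $\alpha\notin A$ by the very definition of $\alpha(S)$, we get $A\subseteq S\setminus\{\alpha\}\subseteq\bar{S}$. For irreducibility I would count gaps: $\mathbb{N}\setminus\bar{S}=\bigl((\mathbb{N}\setminus S)\cup\{\alpha\}\bigr)\setminus\{F-\alpha\}$, and because $\alpha\in S$, $F-\alpha\notin S$ and $\alpha\neq F-\alpha$, this yields $\mathrm{g}(\bar{S})=\mathrm{g}(S)$. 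As $S$ is irreducible, Lemma~\ref{lem-10} gives $\mathrm{g}(S)=\left\lceil\frac{F+1}{2}\right\rceil=\left\lceil\frac{\mathrm{F}(\bar{S})+1}{2}\right\rceil$, and the converse direction of the same lemma shows that $\bar{S}$ is irreducible. Therefore $\bar{S}\in\mathscr{I}(A,F)$, as required.
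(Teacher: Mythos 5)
Your proof is correct and follows essentially the same route as the paper: reduce everything to closure under addition via Lemma~\ref{lem-9}, settle the critical case by combining the minimality of $\alpha(S)$ in $\mathrm{msg}(S)$ with item~2 of Lemma~\ref{lem-4}, and obtain irreducibility from the genus count together with Lemma~\ref{lem-10}. One small slip: for $0<y<\alpha$ the inequality $(F-\alpha)+y>F$ is false (in fact $(F-\alpha)+y<F$), but all you need at that point is $(F-\alpha)+y\neq\alpha$, which follows from $(F-\alpha)+y>F-\alpha>\frac{F}{2}>\alpha$, since membership in $S$ was already secured by the irreducibility argument.
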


\begin{proof}
	First of all, let us observe that the cardinality of $\mathbb{N}\setminus\bar{S}$ is equal to $\mathrm{g}(S)$. Therefore, from Lemma~\ref{lem-10}, we can state that $\bar{S}\in\mathscr{I}(A,F)$ if and only if $\bar{S}$ is a numerical semigroup.
	
	Now, from Lemma~\ref{lem-9}, we have that $S\setminus\{\alpha(S)\}$ is a numerical semigroup. Thus, in order to assert that $\bar{S}$ is a numerical semigroup, it suffices to verify that $F-\alpha(S)+s\in\bar{S}$ for $s=F-\alpha(S)$ or for all $s\in S\setminus\{0,\alpha(S)\}$.
	
	Firstly, and from its definition, $\alpha(S)<\frac{F}{2}$ and then $F-\alpha(S)>\frac{F}{2}$. Thereby, $2(F-\alpha(S))>F$ and, consequently, $2(F-\alpha(S))\in\bar{S}$.
	
	Secondly, if $s\in S\setminus\{0,\alpha(S)\}$ and $F-\alpha(S)+s\notin S$, then, by item 2 of Lemma~\ref{lem-4}, we deduce that $F-\left( F-\alpha(S)+s \right)\in S$ or $F-\alpha(S)+s=\frac{F}{2}$. But, if $F-\left( F-\alpha(S)+s \right)\in S$, then $\alpha(S)-s\in S$ and, in consequence, $\alpha(S)\notin\mathrm{msg}(S)$, which is a contradiction. And, if $F-\alpha(S)+s=\frac{F}{2}$, then $\alpha(S)=\frac{F}{2}+s>\frac{F}{2}$, which is once again a contradiction.
\end{proof}

If $S\in \mathscr{I}(A,F)$, then Proposition~\ref{prop-11} allows us to define the following sequence of elements in $\mathscr{I}(A,F)$.
\begin{itemize}
	\item $S_0=S$;
	\item $S_{n+1}=\left\{ \begin{array}{l} \left( S_n \setminus \{\alpha(S_n)\} \right) \cup \{F-\alpha(S_n)\}, \mbox{ if } S_n\not=\mathrm{C}(A,F), \\[3pt] \mathrm{C}(A,F), \mbox{ in other case.} \end{array} \right.$
\end{itemize}
It is clear that there exists $k\in\mathbb{N}$ such that $S_k=\mathrm{C}(A,F)$.

In order to compute the elements of $\mathscr{I}(A,F)$, we use a tree structure on that set. To do so, let us recall some definitions and results.

A \emph{graph} $G$ is a pair $(V,E)$ where $V$ is a non-empty set (whose elements are called \emph{vertices} of $G$) and $E$ is a subset of $\{(v,w) \in V \times V \mid v \neq w\}$ (whose elements are called \emph{edges} of $G$). A \emph{path (of length $n$) connecting the vertices $x$ and $y$ of $G$} is a sequence of different edges $(v_0,v_1),(v_1,v_2),\ldots,(v_{n-1},v_n)$ such that $v_0=x$ and $v_n=y$. We say that a graph $G$ is a \emph{(directed rooted) tree} if there exists a vertex $r$ (known as the \emph{root} of $G$) such that, for any other vertex $x$ of $G$, there exists a unique path connecting $x$ and $r$. If $(x,y)$ is an edge of the tree, then we say that $x$ is a \emph{child} of $y$. (For more details about trees, see \cite[Chapter 9]{rosen}.)

We define the graph $\mathrm{G}\big(\mathscr{I}(A,F)\big)$ in the following way: $\mathscr{I}(A,F)$ is the set of vertices and $(S,T)\in \mathscr{I}(A,F) \times \mathscr{I}(A,F)$ is an edge if $T=\left( S \setminus \{\alpha(S)\} \right) \cup \{F-\alpha(S)\}$.

The next result is an immediate consequence of the definition of the sequence $\{S_n\mid n\in\mathbb{N}\}\subseteq \mathscr{I}(A,F)$ given after Proposition~\ref{prop-11}.

\begin{theorem}\label{thm-12}
	If $F\notin\langle A \rangle$, then $\mathrm{G}\big(\mathscr{I}(A,F)\big)$ is a tree with root $\mathrm{C}(A,F)$.
\end{theorem}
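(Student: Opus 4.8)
The plan is to show that $\mathrm{G}\big(\mathscr{I}(A,F)\big)$ satisfies the definition of a directed rooted tree with root $\mathrm{C}(A,F)$, i.e.\ that for every vertex $S\in\mathscr{I}(A,F)$ there is a \emph{unique} path connecting $S$ to $\mathrm{C}(A,F)$. The construction before Proposition~\ref{prop-11} already hands us the existence part: starting from $S_0=S$ and iterating the map $S_n\mapsto\bar{S_n}$, Proposition~\ref{prop-11} guarantees each $S_n$ stays in $\mathscr{I}(A,F)$, and the remark that $\mathrm{g}(S_n)$ is invariant under the operation while, say, the sum of the elements of $\mathbb{N}\setminus S_n$ strictly decreases (because $\alpha(S_n)<\tfrac{F}{2}<F-\alpha(S_n)$ means we are removing a gap and creating a smaller one — actually we remove $\alpha(S_n)$ from $S_n$ and adjoin $F-\alpha(S_n)$, so the gap set loses $F-\alpha(S_n)$ and gains $\alpha(S_n)$, a strictly smaller number) forces termination at some $S_k=\mathrm{C}(A,F)$. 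The edges $(S_0,S_1),(S_1,S_2),\ldots,(S_{k-1},S_k)$ then form a path from $S$ to $\mathrm{C}(A,F)$, and they are distinct edges since the first coordinates $S_0,\ldots,S_{k-1}$ are pairwise distinct (the decreasing quantity again).

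First I would record the well-definedness of $\alpha$: by the observation following Lemma~\ref{lem-10}, every $S\in\mathscr{I}(A,F)\setminus\{\mathrm{C}(A,F)\}$ has a well-defined $\alpha(S)$, so each non-root vertex has exactly one outgoing edge in $\mathrm{G}\big(\mathscr{I}(A,F)\big)$, namely to $\bar S$; and $\mathrm{C}(A,F)$ has no outgoing edge, since in the graph $(S,T)$ is an edge only when $T=\left(S\setminus\{\alpha(S)\}\right)\cup\{F-\alpha(S)\}$ and $\alpha$ is undefined at the root. Next I would establish uniqueness of the path. Because each vertex $S\ne\mathrm{C}(A,F)$ has a unique "parent" $\bar S$ (its unique out-neighbour), any path starting at $S$ is forced: its first edge must be $(S,\bar S)$, its second $(\bar S,\bar{\bar S})$, and so on; the sequence is completely determined and, by the termination argument above, reaches $\mathrm{C}(A,F)$ in finitely many steps and cannot be continued past it. Hence the path $S=S_0,S_1,\ldots,S_k=\mathrm{C}(A,F)$ is the only path from $S$ to the root.

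The only genuinely delicate points are (a) arguing that the iteration terminates — handled by exhibiting a strictly monotone integer-valued quantity, e.g.\ $\sum_{x\in\mathbb{N}\setminus S_n}x$, which drops by $F-2\alpha(S_n)>0$ at each step while $\mathrm{g}(S_n)$ and $\mathrm{F}(S_n)=F$ are preserved (so it stays a nonnegative finite sum and must stabilise, which happens exactly at $S=\mathrm{C}(A,F)$ by Lemma~\ref{lem-10}); and (b) the bookkeeping that the edges along the path are pairwise distinct, which is immediate once the vertices $S_0,\dots,S_k$ are seen to be pairwise distinct (same monotone quantity). I expect (a) to be the main obstacle to state cleanly, though it is essentially the content already sketched after Proposition~\ref{prop-11}; everything else is a direct unwinding of the definition of a directed rooted tree together with the uniqueness of the parent map $S\mapsto\bar S$. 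I would then conclude that $\mathrm{G}\big(\mathscr{I}(A,F)\big)$ is a tree with root $\mathrm{C}(A,F)$.
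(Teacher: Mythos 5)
Your proposal is correct and takes essentially the same route as the paper, which simply declares the theorem an immediate consequence of the sequence $S_0=S$, $S_{n+1}=\left(S_n\setminus\{\alpha(S_n)\}\right)\cup\{F-\alpha(S_n)\}$ defined after Proposition~\ref{prop-11}; your strictly decreasing gap-sum and unique-out-neighbour observations merely make explicit the termination and uniqueness that the paper leaves as ``clear''. (Only a citation slip: the fact that stabilisation forces $S_k=\mathrm{C}(A,F)$ comes from Proposition~\ref{prop-8}, i.e.\ the observation stated after Lemma~\ref{lem-10}, not from Lemma~\ref{lem-10} itself.)
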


It is clear that a tree can be built in a recurrent way by starting from its root and connecting each vertex with its children. Our next purpose is to show who are the children of a vertex of $\mathrm{G}\big(\mathscr{I}(A,F)\big)$.

The following result is \cite[Proposition 2.5]{forum}.

\begin{lemma}\label{lem-13}
	Let $S$ be an irreducible numerical semigroup with Frobenius number equal to $F$ and let $x$ be an element of $\mathrm{msg}(S)$ such that $x<F$, $2x-F\notin S$, $3x\not=2F$, and $4x\not=3F$. Then $\bar{S}=\left( S \setminus \{x\} \right) \cup \{F-x\}$ is another irreducible numerical semigroup with Frobenius number equal to $F$.
\end{lemma}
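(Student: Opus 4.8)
The plan is to follow the strategy of the proof of Proposition~\ref{prop-11}, of which this lemma is a generalisation: there $x$ was the specific minimal generator $\alpha(S)$, while here $x$ ranges over arbitrary minimal generators less than $F$, and the three extra hypotheses $2x-F\notin S$, $3x\neq 2F$, $4x\neq 3F$ are precisely what is needed to neutralise the degenerate sums that the proof of Proposition~\ref{prop-11} avoids automatically.

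First I would dispose of the easy reductions. Since $x\in\mathrm{msg}(S)$ we have $x>0$, and $x<F$ gives $0<F-x<F$. From $2x-F\notin S$ it follows that $2x\neq F$ (else $2x-F=0\in S$), so $x\neq F-x$. Also $x\in S$ and $F\notin S$ force $F-x\notin S$, since otherwise $F=x+(F-x)\in S$. Hence $\bar S=(S\setminus\{x\})\cup\{F-x\}$ is obtained from $S$ by removing the gap $F-x$ and creating the new gap $x$, so $\mathrm{g}(\bar S)=\mathrm{g}(S)$. Moreover $F-x<F$, $F\notin\bar S$, and every integer greater than $F$ lies in $S\setminus\{x\}\subseteq\bar S$; thus as soon as $\bar S$ is known to be a numerical semigroup we get $\mathrm{F}(\bar S)=F$, whence $\mathrm{g}(\bar S)=\mathrm{g}(S)=\left\lceil\frac{F+1}{2}\right\rceil$ and Lemma~\ref{lem-10} yields that $\bar S$ is irreducible. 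So the whole problem reduces to showing that $\bar S$ is closed under addition (it obviously contains $0$ and has finite complement).

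The heart of the argument is a case analysis on a sum of two elements of $\bar S$, using the disjoint decomposition $\bar S=(S\setminus\{x\})\sqcup\{F-x\}$. If both summands lie in $S\setminus\{x\}$, their sum lies in $S\setminus\{x\}\subseteq\bar S$ because $S\setminus\{x\}$ is itself a numerical semigroup by Lemma~\ref{lem-9}. For a sum $(F-x)+s$ with $s\in S\setminus\{0,x\}$: it cannot equal $x$, for $(F-x)+s=x$ would give $s=2x-F$, contradicting $2x-F\notin S$; if $(F-x)+s>F$ it lies in $S\setminus\{x\}\subseteq\bar S$; and if $(F-x)+s\leq F$ (so $s<x$), then were $(F-x)+s\notin S$ with $(F-x)+s\neq\frac{F}{2}$, item~2 of Lemma~\ref{lem-4} would give $F-\bigl((F-x)+s\bigr)=x-s\in S$, making $x=s+(x-s)$ a sum of two positive elements of $S$ and contradicting $x\in\mathrm{msg}(S)$; the leftover possibility $(F-x)+s=\frac{F}{2}$ forces $s=x-\frac{F}{2}\in S$, hence $2s=2x-F\in S$, again contradicting the hypothesis. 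Finally, for $(F-x)+(F-x)=2F-2x$: it differs from $x$ by the hypothesis $3x\neq 2F$; if $2F-2x>F$ it lies in $S\setminus\{x\}\subseteq\bar S$; and if $2F-2x\leq F$, then $2F-2x\notin S$ would give, by item~2 of Lemma~\ref{lem-4}, that $2x-F\in S$, a contradiction, unless $2F-2x=\frac{F}{2}$, which is excluded by $4x\neq 3F$. In every case the sum lies in $\bar S$, so $\bar S$ is a submonoid and the proof is complete.

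I expect the main obstacle to be the bookkeeping in this case analysis: one must verify that the three numerical hypotheses cover exactly the degenerate situations — a sum landing on the reinserted gap $x$, or on $\frac{F}{2}$ (where the irreducibility criterion of Lemma~\ref{lem-4} does not apply) — and that nothing has been overlooked. It is worth being careful about the borderline inequalities ($x$ above versus below $\frac{F}{2}$, and $2(F-x)$ above versus below $F$), since the configurations with $x>\frac{F}{2}$ do not arise in Proposition~\ref{prop-11} and are exactly where the new hypotheses are used.
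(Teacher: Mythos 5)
Your proof is correct: the genus-preservation argument ($\mathrm{g}(\bar S)=\mathrm{g}(S)$ together with Lemma~\ref{lem-10}) reduces everything to closure under addition, and your case analysis — sums inside $S\setminus\{x\}$ via Lemma~\ref{lem-9}, sums $(F-x)+s$ and $2(F-x)$ handled via item~2 of Lemma~\ref{lem-4}, with the hypotheses $2x-F\notin S$, $3x\neq 2F$, $4x\neq 3F$ excluding exactly the degenerate landings on $x$ and on $\frac{F}{2}$ — covers all cases without gaps. The paper itself gives no proof of this lemma (it is quoted from \cite{forum}), and your argument is precisely the natural extension of the paper's own proof of Proposition~\ref{prop-11}, so it matches the intended approach rather than taking a different route.
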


It is obvious that $\left\{ x\in\mathrm{msg}\left(\mathrm{C}(A,F)\right) \mid x\notin A \mbox{ and } x<\frac{F}{2} \right\} = \emptyset$. Thus, there does not exist $\min\left\{ x\in\mathrm{msg}\left(\mathrm{C}(A,F)\right) \mid x\notin A \mbox{ and } x<\frac{F}{2} \right\}$. By definition, we consider that $\alpha\left(\mathrm{C}(A,F)\right)=+\infty$.

Let us build the children set of a numerical semigroup $S$ belonging to the tree $\mathrm{G}\big(\mathscr{I}(A,F)\big)$.

\begin{proposition}\label{prop-14}
	If $S\in\mathscr{I}(A,F)$, then the children set of $S$, in the tree $\mathrm{G}\big(\mathscr{I}(A,F)\big)$, is
	\begin{eqnarray}
		\Big\{ \big( S \setminus \{x\} \big) \cup \{F-x\} \mid x\in\mathrm{msg}(S), \; \frac{F}{2}<x<F, \; x\notin A, \; 2x-F\notin S, \nonumber \\ 3x\not=2F, \; 4x\not=3F, \, \mbox{and} \; F-x<\alpha(S) \Big\}. \nonumber
	\end{eqnarray}
\end{proposition}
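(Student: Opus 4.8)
The plan is to prove the two inclusions of the claimed equality separately, relying on Lemma~\ref{lem-13} for one direction and Theorem~\ref{thm-12} together with the definition of the sequence $\{S_n\}$ for the other. First I would fix $S\in\mathscr{I}(A,F)$ and let $\mathcal{C}$ denote the set on the right-hand side of the statement. For the inclusion $\mathcal{C}\subseteq(\text{children of }S)$, take $x\in\mathrm{msg}(S)$ satisfying all seven listed conditions and set $T=\bigl(S\setminus\{x\}\bigr)\cup\{F-x\}$. Since $\frac{F}{2}<x<F$ forces in particular $x<F$, $2x-F\notin S$ by hypothesis, and $3x\neq2F$, $4x\neq3F$, Lemma~\ref{lem-13} applies and gives that $T$ is an irreducible numerical semigroup with Frobenius number $F$; moreover $A\subseteq T$ because $A\subseteq S$ and the only element removed, $x$, is not in $A$. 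Hence $T\in\mathscr{I}(A,F)$. It then remains to check that the edge relation of $\mathrm{G}\bigl(\mathscr{I}(A,F)\bigr)$ holds in the direction $(T,S)$, i.e. that $S=\bigl(T\setminus\{\alpha(T)\}\bigr)\cup\{F-\alpha(T)\}$; this is where $F-x<\alpha(S)$ enters. One shows $\alpha(T)=F-x$: the element $F-x$ lies in $\mathrm{msg}(T)$, is $<\frac{F}{2}$, and is not in $A$ (it cannot be, since if $F-x\in A\subseteq T$ we would not have added it as a generator without contradiction — more precisely $F-x\notin S$ so it is genuinely new), and the condition $F-x<\alpha(S)$ guarantees no smaller minimal generator of $T$ outside $A$ can appear, because below $F-x$ the semigroups $S$ and $T$ agree except possibly at $F-x$ itself, and the minimal generators of $S$ below $F-x$ are the same as those of $T$ below $F-x$.

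For the reverse inclusion, let $T$ be a child of $S$ in the tree, so by definition of $\mathrm{G}\bigl(\mathscr{I}(A,F)\bigr)$ we have $T\in\mathscr{I}(A,F)$ and $S=\bigl(T\setminus\{\alpha(T)\}\bigr)\cup\{F-\alpha(T)\}$ with $T\neq\mathrm{C}(A,F)$, whence $\alpha(T)$ is a well-defined finite number, $\alpha(T)\in\mathrm{msg}(T)$, $\alpha(T)\notin A$, and $\alpha(T)<\frac{F}{2}$. Put $x=F-\alpha(T)$; then $\frac{F}{2}<x<F$, and $S=\bigl(S\setminus\{x\}\bigr)$… more precisely $S\setminus\{x\}=T\setminus\{\alpha(T)\}$ so that $\bigl(S\setminus\{x\}\bigr)\cup\{F-x\}=T$, which is the required form. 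I must then verify that $x$ satisfies the defining conditions of $\mathcal{C}$: that $x\in\mathrm{msg}(S)$, that $x\notin A$, that $2x-F\notin S$, that $3x\neq2F$ and $4x\neq3F$, and that $F-x=\alpha(T)<\alpha(S)$. The membership $x\in\mathrm{msg}(S)$ and $x\notin A$ follow because $x=F-\alpha(T)$ was the element freshly adjoined to form $S$ from $T$ and it sits above $\frac{F}{2}$; the conditions $2x-F=F-2\alpha(T)$, together with $3x\neq 2F$ and $4x\neq 3F$, translate into $\alpha(T)\notin S\setminus\{\alpha(T)\}$-type statements and the inequalities $3\alpha(T)\neq F$, $4\alpha(T)\neq F$, which one extracts from the fact that $\alpha(T)\in\mathrm{msg}(T)$ with $\alpha(T)<\frac{F}{2}$ and from Lemma~\ref{lem-4}(2) applied to the irreducible semigroup $T$. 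Finally $\alpha(T)<\alpha(S)$ is exactly the statement that going one step further back in the tree (from $S$) cannot "undo" the generator $\alpha(T)$, which holds because $S$ is obtained from $T$ by the canonical move at $\alpha(T)$, the minimal such generator below $\frac{F}{2}$ outside $A$ — so $\alpha(S)$, the analogous minimum for $S$, must exceed $\alpha(T)$ (below $\alpha(T)$ nothing changed and at $\alpha(T)$ the semigroup $S$ already contains $F-\alpha(T)$ as a generator above $\frac{F}{2}$, not below).

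The main obstacle I anticipate is the careful bookkeeping around $\alpha(\cdot)$ in both directions: proving that $\alpha(T)=F-x$ in the forward direction and that $\alpha(T)<\alpha(S)$ in the backward direction. Both amount to controlling which minimal generators lie below $\frac{F}{2}$ after performing the swap $S\leftrightarrow\bigl(S\setminus\{x\}\bigr)\cup\{F-x\}$, and the subtlety is that removing a generator above $\frac{F}{2}$ and adding one below it can in principle change the minimal generating set in the lower range in a non-obvious way. I would handle this by observing that for any numerical semigroup $S'$ with Frobenius number $F$ and any $x'>\frac{F}{2}$, the sets $S'$ and $\bigl(S'\setminus\{x'\}\bigr)\cup\{F-x'\}$ coincide on every integer strictly below $F-x'$, so their minimal generators below $F-x'$ agree; combined with the observation that $F-x'$ itself becomes a minimal generator after the move (it is in neither $S'$ nor any proper sub-semigroup generated by smaller elements), this pins down $\alpha$ exactly. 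Once this lemma-level fact is in place, the two inclusions close routinely, and no genuinely new idea beyond Lemma~\ref{lem-13} and Theorem~\ref{thm-12} is needed.
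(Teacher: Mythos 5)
Your proposal is correct and follows essentially the same route as the paper's proof: Lemma~\ref{lem-13} plus the verification $\alpha(T)=F-x$ gives one inclusion, and for a child $T$ one sets $x=F-\alpha(T)$ and checks the listed conditions by contradiction, with your explicit bookkeeping of the minimal generators below the swapped element being exactly what the paper dismisses as ``clear''. One minor remark: the checks $2x-F\notin S$, $3x\neq 2F$, $4x\neq 3F$ do not really need Lemma~\ref{lem-4}(2); they follow directly from the fact that $2\alpha(T)$, $3\alpha(T)$, $4\alpha(T)$ belong to $T$ (and, being different from $\alpha(T)$, to $S$) while $F\notin S$ and $F\notin T$.
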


\begin{proof}
	Let  $x\in\mathrm{msg}(S)$ such that $\frac{F}{2}<x<F$, $x\notin A$, $2x-F\notin S$, $3x\not=2F$, $4x\not=3F$, and $F-x<\alpha(S)$. By Lemma~\ref{lem-13}, we have that $T=\left( S \setminus \{x\} \right) \cup \{F-x\} \in \mathscr{I}(A,F)$. Moreover, since $F-x<\alpha(S)$, then $\alpha(T)=F-x$ and, as a result, $S=\left( T \setminus \{\alpha(T)\} \right) \cup \{F-\alpha(T)\}$. Consequently, and having in mind that $\alpha(T)\in\mathrm{msg}(T)$, we conclude that $T$ is a child of $S$.
	
	Let us see the other inclusion. If $T$ is a child of $S$, then $S=\left( T \setminus \{\alpha(T)\} \right) \cup \{F-\alpha(T)\}$. Therefore, $T=\left( S \setminus \{F-\alpha(T)\} \right) \cup \{F-\left(F-\alpha(T)\right)\}$. It is clear that $F-\alpha(T)\in A$, $\frac{F}{2}<F-\alpha<F$, and $F-\left(F-\alpha(T)\right)<\alpha(S)$. In addition, 
	\begin{itemize}
		\item if $F-\alpha(T)\in A$, then $F\in T$;
		\item if $2(F-\alpha(T))-F\in S$, then $F-2\alpha(T) \in S$ and, since $2\alpha(T)\in S$, we have that $F\in S$;
		\item if $3(F-\alpha(T))-F=2F$, then $F=3\alpha(S)\in S$;
		\item if $4(F-\alpha(T))-F=3F$, then $F=4\alpha(S)\in S$;
	\end{itemize}
	which are four contradictions and, thus, the proof is complete.
\end{proof}

We are now in a position to show the algorithm for computing $\mathscr{I}(A,F)$.

\begin{algorithm}\label{alg-15}
	\mbox{ } \par	
	\noindent $\ $ INPUT: $A\subseteq\mathbb{N}$ and $F\in\mathbb{N}\setminus\{0\}$ such that $F\notin\langle A \rangle$. \par \smallskip
	\noindent $\ $ OUTPUT: $\mathscr{I}(A,F)$.
	\vspace{-3pt}
	\begin{itemize}
		\setlength\itemsep{0pt}
		\item[(1)] $X = \{ \mathrm{C}(A,F) \}$.
		\item[(2)] $Y = \Big\{ \big( \mathrm{C}(A,F) \setminus \{x\} \big) \cup \{F-x\} \mid x\in\mathrm{msg}(\mathrm{C}(A,F)), \; \frac{F}{2}<x<F, \; x\notin A,$ \vspace{-12pt} \begin{flushright} $2x-F\notin \mathrm{C}(A,F), \; 3x\not=2F, \, \mbox{and} \; 4x\not=3F \Big\}$. \end{flushright} \vspace{-10pt}
		\item[(3)] If $Y=\emptyset$, then return $X$.
		\item[(4)] $X := X\cup Y$.
		\item[(5)] For each $S\in Y$, compute $Y_S = \Big\{ \big( S \setminus \{x\} \big) \cup \{F-x\} \mid x\in\mathrm{msg}(S),$ \vspace{-12pt} \begin{flushright} $ \frac{F}{2}<x<F, \; x\notin A, \; 2x-F\notin S, \; 3x\not=2F, \, 4x\not=3F, \; \mbox{and} \; F-x<\alpha(S) \Big\}.$ \end{flushright} \vspace{-10pt}
		\item[(6)] $Y:=\bigcup_{S\in Y} Y_s$ and go to $(3)$.
	\end{itemize}
\end{algorithm}

Let us illustrate how the above algorithm works by means of an example.

\begin{example}\label{exmp-16}
	Using Algorithm~\ref{alg-15}, let us compute $\mathscr{I}(\{4\},11)$.
	\begin{itemize}
		\setlength\itemsep{0pt}
		\item $X = \{ \mathrm{C}(\{4\},11) \} =\{ \langle 4,6,9 \rangle \}$.
		\item $Y = \left\{ \big( \langle 4,6,9 \rangle \setminus \{6\} \big) \cup \{11-6\}, \; \big( \langle 4,6,9 \rangle \setminus \{9\} \big) \cup \{11-9\} \right\} \Rightarrow$ \vspace{3pt} \newline $Y = \left\{ \langle 4,5 \rangle, \langle 2,13 \rangle \right\}$.
		\item $X = \left\{ \langle 4,6,9 \rangle, \langle 4,5 \rangle, \langle 2,13 \rangle \right\}$.
		\item $Y_{\langle 4,5 \rangle} = \emptyset$, $Y_{\langle 2,13 \rangle} = \emptyset$.
		\item $Y = \emptyset$.	
		\item Return $\mathscr{I}(\{4\},11) = \left\{ \langle 4,6,9 \rangle, \langle 4,5 \rangle, \langle 2,13 \rangle \right\}$.
	\end{itemize}
\end{example}

\begin{remark}\label{rem-15}
	Observe that, if we take $A=\{0\}$, then $\mathscr{I}(\{0\},F)$ is the set of the irreducible numerical semigroups with Frobenius number equal to $F$. Therefore, Algorithm~\ref{alg-15} is a generalisation of the algorithm described in \cite{forum} for computing the irreducible numerical semigroups with fixed Frobenius number.
\end{remark}

\section{The set $\mathscr{S}\boldsymbol{(A,F)}$}\label{sect-S(A,F)}

As in the previous section, in this one $A$ is a non-empty subset of $\mathbb{N}$ and $F$ is a positive integer. Our main purpose here is to show an algorithm that allows us to compute $\mathscr{S}(A,F)$.

Recall that, by Proposition~\ref{prop-1}, we know that $\mathscr{S}(A,F)\not=\emptyset$ if and only if $F\notin \langle A \rangle$. Moreover, by Lemma~\ref{lem-5}, we know that, if $S$ is a numerical semigroup with Frobenius number $F$, then $\Delta(S) = S \cup \left\{ x\in\mathbb{N}\setminus S \mid F-x\notin S \mbox{ and } x>\frac{F}{2} \right\}$ is an irreducible numerical semigroup with Frobenius number equal to $F$.

Let us define on $\mathscr{S}(A,F)$ the equivalence relation
\[ S \,\mathfrak{r}\, T  \,\mbox{ if }\,  \Delta(S)=\Delta(T). \]
If $S\in\mathscr{S}(A,F)$, then $[S] = \big\{ T\in\mathscr{S}(A,F) \mid S \,\mathfrak{r}\, T \big\}$ and it is well known that the quotient set $\mathscr{S}(A,F)/\mathfrak{r}=\big\{ [S] \mid S\in\mathscr{S}(A,F) \big\}$ is a partition of $\mathscr{S}(A,F)$.

In the following result we stablish that there is a one-to-one correspondence between the sets $\mathscr{I}(A,F)$ and $\mathscr{S}(A,F)/\mathfrak{r}$. 

\begin{proposition}\label{prop-17}
	If $F\notin\langle A \rangle$, then $\mathscr{S}(A,F)/\mathfrak{r}=\big\{ [S] \mid S\in\mathscr{I}(A,F) \big\}$. Moreover, if $\{S,T\}\subseteq\mathscr{I}(A,F)$ and $S\not=T$, then $[S]\cap[T]=\emptyset$.
\end{proposition}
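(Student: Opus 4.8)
\noindent\textbf{Proof plan.} The plan is to reduce both assertions to a single observation: for a numerical semigroup $S$ with $\mathrm{F}(S)=F$, one has $\Delta(S)=S$ exactly when $S$ is irreducible, and (by Lemma~\ref{lem-5}) $\Delta(S)$ is always an irreducible numerical semigroup with Frobenius number $F$. Granting this, $\Delta$ becomes a well-defined retraction of $\mathscr{S}(A,F)$ onto $\mathscr{I}(A,F)$ whose fibres are exactly the classes of $\mathfrak{r}$.

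First I would check that $\Delta$ fixes the irreducible members of $\mathscr{S}(A,F)$. Since $\Delta(S)=S\cup\{x\in\mathbb{N}\setminus S\mid F-x\notin S,\ x>F/2\}$ and the appended set is contained in $\{x\in\mathbb{N}\setminus S\mid F-x\notin S,\ x\neq F/2\}$ (any integer with $x>F/2$ has $x\neq F/2$), item~2 of Lemma~\ref{lem-4} shows the appended set is empty when $S$ is irreducible; hence $\Delta(S)=S$ for every $S\in\mathscr{I}(A,F)$. Conversely, for an arbitrary $S\in\mathscr{S}(A,F)$, Lemma~\ref{lem-5} gives that $\Delta(S)$ is irreducible with Frobenius number $F$, and from $A\subseteq S\subseteq\Delta(S)$ we get $\Delta(S)\in\mathscr{I}(A,F)$. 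Moreover $\Delta(S)$ is the unique irreducible element of $[S]$: it lies in $[S]$ because $\Delta(\Delta(S))=\Delta(S)$, and if $T\in[S]$ is irreducible then $T=\Delta(T)=\Delta(S)$.

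From here the two conclusions are immediate. For the first, every class satisfies $[S]=[\Delta(S)]$ with $\Delta(S)\in\mathscr{I}(A,F)$, so $\mathscr{S}(A,F)/\mathfrak{r}\subseteq\{[S]\mid S\in\mathscr{I}(A,F)\}$, while the reverse inclusion is trivial because $\mathscr{I}(A,F)\subseteq\mathscr{S}(A,F)$. For the ``moreover'' part, if $S,T\in\mathscr{I}(A,F)$ with $S\neq T$, then $\Delta(S)=S\neq T=\Delta(T)$, so $S$ and $T$ are not $\mathfrak{r}$-related; since the classes of an equivalence relation are either equal or disjoint, $[S]\cap[T]=\emptyset$. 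There is no real obstacle in this argument; the only point that deserves an explicit line is that $\Delta$ fixes irreducible semigroups, and that is read directly off Lemma~\ref{lem-4}.
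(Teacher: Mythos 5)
Your proposal is correct and follows essentially the same route as the paper: use Lemma~\ref{lem-5} to see that $\Delta$ maps $\mathscr{S}(A,F)$ into $\mathscr{I}(A,F)$, observe that $\Delta$ fixes irreducible semigroups (so $\Delta(\Delta(S))=\Delta(S)$), and deduce both that every class has an irreducible representative and that distinct irreducibles have disjoint classes. Your version is just a bit more explicit than the paper's in justifying $\Delta(S)=S$ for irreducible $S$ via item~2 of Lemma~\ref{lem-4}, which the paper leaves implicit.
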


\begin{proof}
	From Lemma~\ref{lem-5}, if $S\in\mathscr{S}(A,F)$, then $\Delta(S)\in\mathscr{I}(A,F)$. Moreover, it is clear that $\Delta(S)=\Delta(\Delta(S))$. Therefore, $\mathscr{S}(A,F)/\mathfrak{r}=\big\{ [S] \mid S\in\mathscr{I}(A,F) \big\}$.
	
	Now, if $\{S,T\}\subseteq\mathscr{I}(A,F)$ and $[S]\cap[T]\not=\emptyset$, then $\Delta(S)=\Delta(T)$. But, since $S$ and $T$ are irreducible, then $S=\Delta(S)$ and $T=\Delta(T)$. Thus, we conclude that $S=T$.
\end{proof}

As a consequence of Proposition~\ref{prop-17}, we have that, in order to build all the elements of $\mathscr{S}(A,F)$, it is sufficient to have
\begin{enumerate}[label=(\roman*)]
	\item an algorithm to build $\mathscr{I}(A,F)$ and \label{en-17-1}
	\item an algorithm that, given $S\in\mathscr{I}(A,F)$, build $[S]$. \label{en-17-2}
\end{enumerate}
Since Algorithm~\ref{alg-15} solves \ref{en-17-1}, we focus on our attention in describing a procedure for solving \ref{en-17-2}.

The next result has an easy proof and is therefore omitted.

\begin{lemma}\label{lem-18}
	If $S\in\mathscr{I}(A,F)$, then the following facts are true.
	\begin{enumerate}
		\item $\max([S])=S$ and $\,\min([S])=\left\langle A\cup\left\{x\in S \mid x<\frac{F}{2}\right\}\right\rangle \cup \{F+1,\to\}$.
		
		\item If $T$ is a numerical semigroup, then $T\in[S]$ if and only if $\min([S])\subseteq T \subseteq S$.
	\end{enumerate}
\end{lemma}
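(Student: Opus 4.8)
The plan is to prove the two statements of Lemma~\ref{lem-18} in sequence, with the second following rather easily once the first is in place. Throughout, write $m = \min([S])$ for the candidate semigroup $\left\langle A\cup\left\{x\in S \mid x<\frac{F}{2}\right\}\right\rangle \cup \{F+1,\to\}$, and recall that for $T\in\mathscr{S}(A,F)$ we have $T\in[S]$ precisely when $\Delta(T)=\Delta(S)=S$ (the last equality because $S$ is irreducible, so $S=\Delta(S)$).

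For the claim $\max([S])=S$: since $S$ itself is irreducible, $A\subseteq S$, and $\mathrm{F}(S)=F$, we have $S\in\mathscr{S}(A,F)$ and $\Delta(S)=S$, so $S\in[S]$. Any $T\in[S]$ satisfies $\Delta(T)=S$, and by the definition of $\Delta$ we have $T\subseteq\Delta(T)=S$; hence $S$ is the largest element of $[S]$. For the claim $\min([S])=m$: first I would check $m\in\mathscr{S}(A,F)$, i.e. that $m$ is a numerical semigroup, that $A\subseteq m$, and that $\mathrm{F}(m)=F$. It is closed under addition and contains $0$ by construction; $F\notin m$ because $F\notin\langle A\rangle$ and, more importantly, because every generator used is either in $A$ or is an element of $S$ below $F/2$, so any sum landing on $F$ would force $F\in S$ (if only $S$-generators below $F/2$ are used, their sum lies in $S$; mixing in $A\subseteq S$ generators keeps the sum in $S$) --- contradiction; and $F+1,\to$ are present so $\mathrm{F}(m)=F$. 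Next I would verify $\Delta(m)=S$. Since $m\subseteq S$ (every generator of $m$ lies in $S$: elements of $A$ by $A\subseteq S$, the chosen small elements by definition, and $\{F+1,\to\}\subseteq S$), and $m$ contains all elements of $S$ below $F/2$ together with everything above $F$, the only elements of $S$ possibly missing from $m$ are those $x\in S$ with $F/2<x<F$ that are minimal generators not expressible from $A$ and the small part. Applying $\Delta$ to $m$ adds back exactly the $x\in\mathbb{N}\setminus m$ with $F-x\notin m$ and $x>F/2$; one shows this recovers precisely $S$, using that $S$ is irreducible (so by item~2 of Lemma~\ref{lem-4}, for $x\in\mathbb{N}\setminus S$ with $x>F/2$ one has $F-x\in S$, hence $F-x<F/2$ so $F-x\in m$, so such $x$ is \emph{not} added to $m$; while for $x\in S\setminus m$ with $x>F/2$, minimality of the generating set of $S$ gives $F-x\notin S\supseteq m$... no, more carefully $F-x\notin m$ follows because $x\notin m$ means no representation exists, forcing $F-x\notin m$ by the semigroup property of $S$). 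Finally, any $T\in[S]$ has $\Delta(T)=S$; I must show $m\subseteq T$. Since $A\subseteq T$ and $T\subseteq\Delta(T)=S$, it remains to see that every $x\in S$ with $x<F/2$ lies in $T$: if some such $x\notin T$, then because $x<F/2$ also $F-x>F/2$, and $x\in S$ gives $F-x\in S$ only if... instead use Lemma~\ref{lem-4}(2) applied to $T$ is not available since $T$ need not be irreducible; rather, $x\notin T$ with $F-x$: if $F-x\notin T$ then $x$ would be one of the elements $\Delta$ adds to $T$ only when $x>F/2$, which fails; so $F-x\in T\subseteq S$, but then $x\notin T$ while $x\in S$ and $x<F/2$ means $x$ is added back by $\Delta(T)$ only if $x>F/2$, contradiction --- so actually $x\in T$. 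Then $\{F+1,\to\}\subseteq T$ since $\mathrm{F}(T)=F$, giving $m\subseteq T$.

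For the second statement, given a numerical semigroup $T$: if $m\subseteq T\subseteq S$, then $A\subseteq m\subseteq T$ and $F\notin S\supseteq T$ while $F+1,\to\subseteq m\subseteq T$, so $\mathrm{F}(T)=F$ and $T\in\mathscr{S}(A,F)$; applying $\Delta$ and using monotonicity-type reasoning together with $\Delta(m)=S=\Delta(S)$ sandwiches $\Delta(T)$ between $S$ and $S$, so $\Delta(T)=S$ and $T\in[S]$. Conversely, $T\in[S]$ gives $T\subseteq\Delta(T)=S=\max([S])$ and $m=\min([S])\subseteq T$ by item~1. The converse direction is immediate from item~1; the forward direction needs the observation that $\Delta$ is monotone enough on the interval $[m,S]$ to pin down $\Delta(T)$.

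The main obstacle I expect is the careful verification that $\Delta(m)=S$ and, symmetrically, that every $x\in S$ below $F/2$ must already lie in any $T\in[S]$ --- both hinge on the irreducibility characterization in Lemma~\ref{lem-4}(2) (or Lemma~\ref{lem-2}) and on disentangling which elements in the window $F/2<x<F$ get added by $\Delta$. The arithmetic near $x=F/2$ (the exceptional case $x=F/2$, which cannot occur for $x\in\mathbb{N}\setminus S$ anyway when $F$ is odd, and is handled separately when $F$ is even) is the fiddly part, but it is exactly the content of Lemmas~\ref{lem-4} and~\ref{lem-5}, which we are entitled to invoke. Everything else is bookkeeping with generators and the definition of $\langle\cdot\rangle$.
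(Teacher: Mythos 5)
The paper gives no proof of this lemma (it is dismissed as easy), so there is nothing to compare against; judged on its own, your outline is the right one and its skeleton is sound: $T\subseteq\Delta(T)=S$ for every $T\in[S]$ gives $\max([S])=S$; the set $m=\left\langle A\cup\{x\in S\mid x<\frac{F}{2}\}\right\rangle\cup\{F+1,\to\}$ is a numerical semigroup lying in $\mathscr{S}(A,F)$ and contained in $S$; $\Delta(m)=S$; and every $T\in[S]$ contains $A$, $\{F+1,\to\}$ and each element of $S$ smaller than $\frac{F}{2}$ (because $\Delta$ only adjoins elements larger than $\frac{F}{2}$), hence contains $m$. One local repair: in the verification of $\Delta(m)=S$, your justification that $F-x\notin m$ for $x\in S\setminus m$ is garbled --- neither ``minimality of the generating set of $S$'' nor ``$x\notin m$ means no representation exists'' is the point. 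The correct one-liner is: $m\subseteq S$, so if $F-x\in m$ then $F=x+(F-x)\in S$ since $x\in S$, contradicting $\mathrm{F}(S)=F$. (The same one-liner gives $F\notin m$ and, later, $F-x\notin T$.)

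The genuine gap is the forward direction of item 2, which you leave hanging on ``$\Delta$ is monotone enough on the interval $[m,S]$''. Monotonicity of $\Delta$ is not a general fact: with $F=7$, $T_1=\{0\}\cup\{8,\to\}\subseteq T_2=\{0,3,6\}\cup\{8,\to\}$, one checks $4\in\Delta(T_1)$ but $4\notin\Delta(T_2)$. The restricted statement you need is true, but only because of a feature of this particular interval that you never isolate: since $m$ already contains every element of $S$ below $\frac{F}{2}$, any numerical semigroup $T$ with $m\subseteq T\subseteq S$ has exactly the same elements smaller than $\frac{F}{2}$ as $S$ (and as $m$). Once this is noted, the cleanest route is not monotonicity at all but a verbatim repetition of your $\Delta(m)=S$ computation with $T$ in place of $m$: every $x\in S\setminus T$ satisfies $\frac{F}{2}<x<F$ and $F-x\notin T$ (otherwise $F\in S$), so $\Delta$ adjoins it; and any adjoined $x\notin S$ would, by item 2 of Lemma~\ref{lem-4}, give $F-x\in S$ with $F-x<\frac{F}{2}$, hence $F-x\in m\subseteq T$, contradicting $F-x\notin T$. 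Either supply this argument directly or prove the interval-restricted monotonicity from the same observation; as written, the sandwich $S=\Delta(m)\subseteq\Delta(T)\subseteq\Delta(S)=S$ rests on an unproven (and, in general, false) principle.
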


If $S\in\mathscr{I}(A,F)$, then we denote by $\Gamma(S)=\min([S])$ and by $\mathcal{D}(S)=S\setminus\Gamma(S)$. Moreover, as usual, if $A,B\subseteq \mathbb{Z}$, then $A+B = \{a+b \mid a\in A, \, b\in B\}$.

In the next result, fixed $S\in\mathscr{I}(A,F)$, we describe all the elements of $[S]$.

\begin{proposition}\label{prop-19}
	Let $S\in\mathscr{I}(A,F)$ and let $B$ be a subset of $\mathcal{D}(S)$. Then $\Gamma(S) \cup \left( \big(B+\Gamma(S)\big) \cap \mathcal{D}(S) \right) \in[S]$. Moreover, all the elements of $[S]$ are built in this way.
\end{proposition}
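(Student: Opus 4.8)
The plan is to establish a bijection between subsets $B \subseteq \mathcal{D}(S)$ and the elements of $[S]$, where the subset $B$ attached to a numerical semigroup $T \in [S]$ is $B = T \cap \mathcal{D}(S) = T \setminus \Gamma(S)$. By Lemma~\ref{lem-18}, the members of $[S]$ are precisely the numerical semigroups $T$ with $\Gamma(S) \subseteq T \subseteq S$; since $S = \Gamma(S) \cup \mathcal{D}(S)$ is a disjoint union, such a $T$ is determined by $T \cap \mathcal{D}(S)$, so the assignment $T \mapsto T \setminus \Gamma(S)$ is injective on $[S]$. Thus it suffices to show two things: first, that for every $B \subseteq \mathcal{D}(S)$ the set $T_B := \Gamma(S) \cup \big( (B + \Gamma(S)) \cap \mathcal{D}(S) \big)$ is an element of $[S]$; and second, that $T_B \setminus \Gamma(S) = B$, i.e.\ that the construction recovers the subset it started from. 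The latter gives surjectivity onto $[S]$ once we know each $T \in [S]$ equals $T_{B}$ for $B = T\setminus\Gamma(S)$.

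First I would verify that $T_B \in [S]$. Clearly $\Gamma(S) \subseteq T_B \subseteq \Gamma(S) \cup \mathcal{D}(S) = S$, so by Lemma~\ref{lem-18}(2) it is enough to check that $T_B$ is a numerical semigroup. Since $\mathbb{N} \setminus T_B \subseteq \mathbb{N} \setminus \Gamma(S)$ is finite and $0 \in \Gamma(S) \subseteq T_B$, the only issue is closure under addition. Take $u, v \in T_B$ and write each as an element of $\Gamma(S)$ or of $(B + \Gamma(S)) \cap \mathcal{D}(S)$. If both lie in $\Gamma(S)$, then $u+v \in \Gamma(S) \subseteq T_B$ because $\Gamma(S)$ is a numerical semigroup. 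If $u \in \Gamma(S)$ and $v = b + \gamma$ with $b \in B \subseteq \mathcal{D}(S) \subseteq S$ and $\gamma \in \Gamma(S)$, then $u + v = b + (u + \gamma)$ with $u+\gamma \in \Gamma(S)$, so $u+v \in (B + \Gamma(S))$; and since $u+v \in S$ (as $S$ is a numerical semigroup containing $b$, $u$, $\gamma$), either $u+v \in \Gamma(S) \subseteq T_B$ or $u+v \in \mathcal{D}(S)$, in which case $u+v \in (B+\Gamma(S)) \cap \mathcal{D}(S) \subseteq T_B$. The remaining case $u, v \in (B+\Gamma(S)) \cap \mathcal{D}(S)$ is handled the same way: write $u = b + \gamma$ and observe $u+v = b + (\gamma + v)$ with $\gamma + v \in \Gamma(S)$ (as $v \in S$ and $\Gamma(S) + S \subseteq S$; one should check $\Gamma(S) + S \subseteq \Gamma(S)$, which holds because $\Gamma(S) = \langle A \cup \{x \in S \mid x < F/2\}\rangle \cup \{F+1,\to\}$ is a numerical semigroup whose complement in $\mathbb{N}$ is contained in $\{1,\ldots,F\}$ and $S$ shares the Frobenius number $F$ — more simply, $\gamma + v \in \Gamma(S)$ because any sum of an element of $\Gamma(S)$ with a positive integer landing back inside $\Gamma(S)$... ), hence $u+v \in B + \Gamma(S)$ and, as before, $u+v \in S$ forces $u+v \in T_B$.

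Next I would show $T_B \setminus \Gamma(S) = B$. The inclusion $T_B \setminus \Gamma(S) \subseteq \mathcal{D}(S) \cap (B + \Gamma(S))$ is immediate from the definition. For the reverse, note $0 \in \Gamma(S)$ gives $B = B + \{0\} \subseteq B + \Gamma(S)$, and $B \subseteq \mathcal{D}(S)$, so $B \subseteq (B+\Gamma(S)) \cap \mathcal{D}(S) \subseteq T_B$; since $B \cap \Gamma(S) = \emptyset$ (because $B \subseteq \mathcal{D}(S) = S \setminus \Gamma(S)$), we get $B \subseteq T_B \setminus \Gamma(S)$. This proves equality. Finally, for surjectivity: given $T \in [S]$, set $B := T \setminus \Gamma(S) \subseteq \mathcal{D}(S)$. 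Then $T = \Gamma(S) \cup B$, and I must argue $B = (B+\Gamma(S)) \cap \mathcal{D}(S)$, i.e.\ $T_B = T$. The inclusion $B \subseteq (B+\Gamma(S))\cap\mathcal{D}(S)$ was just shown; conversely if $d \in \mathcal{D}(S)$ with $d = b + \gamma$, $b \in B \subseteq T$, $\gamma \in \Gamma(S) \subseteq T$, then $d \in T$ (as $T$ is closed under addition), and $d \in \mathcal{D}(S)$ means $d \notin \Gamma(S)$, so $d \in T \setminus \Gamma(S) = B$. Hence $T_B = T$.

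The step I expect to be the main obstacle is the closure argument for $T_B$, specifically pinning down the facts $\Gamma(S) + S \subseteq S$ and that a sum of two elements of $T_B$ landing in $S$ necessarily lands in $T_B$ — this rests on $T_B$ containing all of $S$ above $F$ (both $\Gamma(S)$ and $S$ have Frobenius number $F$) together with the case analysis above, and it is where one must be careful not to circularly assume $T_B$ is already a semigroup. Once that bookkeeping is done, the bijection and hence the ``moreover'' statement follow formally.
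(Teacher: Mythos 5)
There is a genuine gap in the closure argument for $\bar S=\Gamma(S)\cup\big((B+\Gamma(S))\cap\mathcal{D}(S)\big)$, precisely in the case you flag as the obstacle: two elements $u,v\in (B+\Gamma(S))\cap\mathcal{D}(S)$. Both auxiliary facts you lean on there are false. The claim $\Gamma(S)+S\subseteq\Gamma(S)$ fails: for $S=\langle 4,6,9\rangle\in\mathscr{I}(\{4\},11)$ one has $\Gamma(S)=\langle 4,13,14,15\rangle$ and $\mathcal{D}(S)=\{6,9,10\}$, and $4+6=10\notin\Gamma(S)$. Worse, the intermediate conclusion $u+v\in B+\Gamma(S)$ also fails: with $B=\{6\}$ we get $(B+\Gamma(S))\cap\mathcal{D}(S)=\{6,10\}$, and $u=6$, $v=10$ give $u+v=16\notin B+\Gamma(S)=\{6,10,14,18,19,\ldots\}$, even though $16\in\bar S$. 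So this case cannot be settled by rewriting $u+v$ as an element of $B+\Gamma(S)$. The repair is the observation the paper uses: every element of $\mathcal{D}(S)=S\setminus\Gamma(S)$ is strictly greater than $\frac{F}{2}$ (anything in $S$ below $\frac{F}{2}$ and anything above $F$ already lies in $\Gamma(S)$), so $u+v>F$ and hence $u+v\in\{F+1,\to\}\subseteq\Gamma(S)\subseteq\bar S$ directly. Your other two closure cases, and your treatment of finiteness and of $0\in\bar S$, are fine and coincide with the paper's.

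A secondary flaw is the claim that the construction ``recovers the subset it started from'', i.e.\ $\bar S\setminus\Gamma(S)=B$ for every $B\subseteq\mathcal{D}(S)$: this is false (same example, $B=\{6\}$ gives $\bar S\setminus\Gamma(S)=\{6,10\}$), and your argument only establishes $B\subseteq\bar S\setminus\Gamma(S)$ before asserting equality; the map $B\mapsto\bar S$ is not injective, so there is no bijection between subsets of $\mathcal{D}(S)$ and $[S]$. Fortunately this claim is not needed: the proposition only asserts that every element of $[S]$ arises from \emph{some} $B$, and your final paragraph proves exactly that, correctly and by the same route as the paper (take $B=T\setminus\Gamma(S)$ and use closure of $T$ plus $0\in\Gamma(S)$ to get $B=(B+\Gamma(S))\cap\mathcal{D}(S)$). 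So once the closure case is fixed as above and the spurious ``recovery'' claim is dropped, the proof is complete.
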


\begin{proof}
	Let $\bar{S} = \Gamma(S) \cup \left( \big(B+\Gamma(S)\big) \cap \mathcal{D}(S) \right)$. Then $\Gamma(S) \subseteq \bar{S} \subseteq S$ and therefore, from Lemma~\ref{lem-18}, if we see that $\bar{S}$ is a numerical semigroup, then we will get that $\bar{S}\in[S]$.
	
	First of all, since $\Gamma(S) \subseteq \bar{S}$, we have that $0 \in \bar{S}$ and $\mathbb{N} \setminus \bar{S} \subseteq \mathbb{N} \setminus \Gamma(S)$. Thus, $0 \in \bar{S}$ and $\mathbb{N} \setminus \bar{S}$ is finite.
	
	Secondly, we have to see that $\bar{S}$ is closed under addition. For this we have three possibilities.
	\begin{itemize}
		\item If $x,y \in \Gamma(S)$, then $x+y \in \Gamma(S) \subseteq \bar{S}$.
		\item If $x,y \in \big(B+\Gamma(S)\big) \cap \mathcal{D}(S)$, then $x,y \in \mathcal{D}(S)$ and, thereby, $x,y > \frac{F}{2}$. Thus, $x+y > F$ and, consequently, $x+y \in \Gamma(S) \subseteq \bar{S}$.
		\item Let us take $x \in \Gamma(S)$ and $y \in \big(B+\Gamma(S)\big) \cap \mathcal{D}(S)$. Then we have that $y=y_1+y_2$, with $y_1\in B$ and $y_2\in\Gamma(S)$, and, therefore, $x+y \in B+\Gamma(S)$. Now,
		\begin{itemize}[label=$\ast$]
			\item on the one hand, if $x+y \in \mathcal{D}(S)$, then $x+y \in \big(B+\Gamma(S)\big) \cap \mathcal{D}(S) \subseteq \bar{S}$,
			\item and, on the other hand, if $x+y \not\in \mathcal{D}(S)$, then $x+y \in \Gamma(S) \subseteq \bar{S}$ (observe that $x+y\in S$ because $x\in\Gamma(S)\subseteq S$ and $y\in\mathcal{D}(S)\subseteq S$).
		\end{itemize}
	\end{itemize}

	Now, let $T$ be a numerical semigroup such that $T\in [S]$. From Lemma~\ref{lem-18}, we have that $\Gamma(S)\subseteq T \subseteq S$ and, therefore, $T = \Gamma(S) \cup B$ for some set $B \subseteq \mathcal{D}(S)$. Then, having in mind that $T$ is a numerical semigroup, we get that $T=\Gamma(S) \cup \left( \big(B+\Gamma(S)\big) \cap \mathcal{D}(S) \right)$. Effectively, it is clear that $B \subseteq \big(B+\Gamma(S)\big) \cap \mathcal{D}(S)$, but
	\[ \big(B+\Gamma(S)\big) \cap \mathcal{D}(S) \subseteq T \cap \mathcal{D}(S) = 
	\big( \Gamma(S) \cup B \big) \cap \mathcal{D}(S) = \]
	\[ \big( \Gamma(S) \cap \mathcal{D}(S) \big) \cup \big( B \cap \mathcal{D}(S) \big) = \emptyset \cup B = B. \]
	Thus, we finish the proof.
\end{proof}

Let us note that, if $\Gamma(S)=S$ for some $S\in\mathscr{I}(A,F)$, then $\mathcal{D}(S) = \emptyset$. In other case, if $d\in\mathcal{D}(S)$, then we denote by $\mathrm{T}(d) = \left( \{d\}+\Gamma(S) \right) \cap \mathcal{D}(S)$. Moreover, if $B\subseteq \mathcal{D}(S)$, then $\mathrm{T}(B) = \cup_{b\in B} \mathrm{T}(b)$. The following result is a reformulation of Proposition~\ref{prop-19} using this new notation.

\begin{proposition}\label{prop-20}
	If $S\in\mathscr{I}(A,F)$ and $\mathcal{K}(S) = \{ \mathrm{T}(B) \mid B\subseteq \mathcal{D}(S) \}$, then $[S] = \left\{ \Gamma(S)\cup X \mid X\in\mathcal{K}(S) \right\}$.
\end{proposition}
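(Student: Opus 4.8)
The plan is to obtain Proposition~\ref{prop-20} as a direct translation of Proposition~\ref{prop-19}, the only point to verify being that the shorthand $\mathrm{T}(B)$ coincides with the set $\big(B+\Gamma(S)\big)\cap\mathcal{D}(S)$ appearing there. So the first step is to record the identity
\[ \mathrm{T}(B)=\big(B+\Gamma(S)\big)\cap\mathcal{D}(S) \qquad \mbox{for all } B\subseteq\mathcal{D}(S). \]
This is purely set-theoretic: writing $B+\Gamma(S)=\bigcup_{b\in B}\big(\{b\}+\Gamma(S)\big)$ and using that intersection distributes over union gives $\big(B+\Gamma(S)\big)\cap\mathcal{D}(S)=\bigcup_{b\in B}\big((\{b\}+\Gamma(S))\cap\mathcal{D}(S)\big)=\bigcup_{b\in B}\mathrm{T}(b)=\mathrm{T}(B)$, which is exactly the definition of $\mathrm{T}(B)$.

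With this identity, Proposition~\ref{prop-19} reads as follows: for each $B\subseteq\mathcal{D}(S)$ the set $\Gamma(S)\cup\mathrm{T}(B)$ lies in $[S]$, and every element of $[S]$ is of this form. Hence $[S]=\{\Gamma(S)\cup\mathrm{T}(B)\mid B\subseteq\mathcal{D}(S)\}$, and since $\mathcal{K}(S)=\{\mathrm{T}(B)\mid B\subseteq\mathcal{D}(S)\}$ by definition, this is precisely the asserted equality $[S]=\{\Gamma(S)\cup X\mid X\in\mathcal{K}(S)\}$. (Different choices of $B$ may produce the same $\mathrm{T}(B)$, but this is irrelevant since only the equality of the two collections of semigroups is claimed.)

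I would close by checking the degenerate case $\Gamma(S)=S$: then $\mathcal{D}(S)=\emptyset$, the only admissible $B$ is $\emptyset$, we have $\mathrm{T}(\emptyset)=\emptyset$, so $\mathcal{K}(S)=\{\emptyset\}$ and the formula yields $[S]=\{S\}$, which is consistent with $\max([S])=\min([S])=S$ from Lemma~\ref{lem-18}. Honestly there is no real obstacle here: the statement is a notational repackaging of Proposition~\ref{prop-19}, and the only thing needing a moment's care is the distributivity computation above together with the empty-union convention used in the edge case.
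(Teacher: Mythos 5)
Your proposal is correct and matches the paper's treatment: the paper gives no separate proof, stating only that Proposition~\ref{prop-20} is a reformulation of Proposition~\ref{prop-19} in the new notation, which is exactly your argument. Your explicit distributivity check that $\mathrm{T}(B)=\big(B+\Gamma(S)\big)\cap\mathcal{D}(S)$ simply makes that reformulation precise.
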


We are ready to show the algorithm for computing $[S]$ when $S\in\mathscr{I}(A,F)$.

\begin{algorithm}\label{alg-21}
	\mbox{ } \par	
	\noindent $\ $ INPUT: $S \in \mathscr{I}(A,F)$. \par \smallskip
	\noindent $\ $ OUTPUT: $[S]$.
	\vspace{-3pt}
	\begin{itemize}
		\setlength\itemsep{0pt}
		\item[(1)] Compute $\Gamma(S)$ and $\mathcal{D}(S)$.
		\item[(2)] Compute $\mathcal{K}(S) = \{ \mathrm{T}(B) \mid B\subseteq \mathcal{D}(S) \}$.
		\item[(3)] Return $[S] = \left\{ \Gamma(S)\cup X \mid X\in\mathcal{K}(S) \right\}$.
	\end{itemize}
\end{algorithm}

\begin{remark}\label{rem-21}
	The results of this section allow us to have an algorithmic process to compute $\mathscr{S}(A,F)$. Specifically, $\mathscr{S}(A,F) = \cup_{S\in\mathscr{I}(A,F)}[S]$. Moreover, since $\mathscr{S}(\{0\},F)$ is the set of numerical semigroups with Frobenius number equal to $F$, this process is a generalisation of the algorithm shown in \cite{computer} for computing all the numerical semigroups with a given Frobenius number.
\end{remark}

Let us illustrate with an example how the above algorithmic process works.

\begin{example}\label{exmp-21}
	Let us compute $\mathscr{S}(\{4\},11)$.
	\begin{itemize}
		\setlength\itemsep{0pt}
		\item From Example~\ref{exmp-16}, we have that $\mathscr{I}(\{4\},11) = \left\{ \langle 2,13 \rangle, \langle 4,5 \rangle, \langle 4,6,9 \rangle \right\}$.
		\item If $S=\langle 2,13 \rangle$, then $\Gamma(S)=S$ and $\mathcal{D}(S)=\emptyset$. Thus, $[\langle 2,13 \rangle] = \{ \langle 2,13 \rangle\}$.
		\item If $S=\langle 4,5 \rangle$, then $\Gamma(S)=S$ and $\mathcal{D}(S)=\emptyset$. Thus, $[\langle 4,5 \rangle] = \{ \langle 4,5 \rangle\}$.
		\item If $S=\langle 4,6,9 \rangle$, then $\Gamma(S)=\langle 4,13,14,15 \rangle$ and $\mathcal{D}(S)=\{6,9,10\}$.
		\begin{itemize}[label=$\ast$]
			\item If $B=\emptyset$, then $\mathrm{T}(B)=\emptyset$.
			\item If $B=\{6\}$, then $\mathrm{T}(B)=\{6,10\}$.
			\item If $B=\{9\}$, then $\mathrm{T}(B)=\{9\}$.
			\item If $B=\{10\}$, then $\mathrm{T}(B)=\{10\}$.
			\item If $B=\{6,9\}$, then $\mathrm{T}(B)=\mathrm{T}(\{6\})\cup \mathrm{T}(\{9\})=\{6,9,10\}$.
			\item If $B=\{6,10\}$, then $\mathrm{T}(B)=\mathrm{T}(\{6\})\cup \mathrm{T}(\{10\})=\{6,10\}$.
			\item If $B=\{9,10\}$, then $\mathrm{T}(B)=\mathrm{T}(\{9\})\cup \mathrm{T}(\{10\})=\{9,10\}$.
			\item If $B=\{6,9,10\}$, then $\mathrm{T}(B)=\mathrm{T}(\{6\})\cup \mathrm{T}(\{9\})\cup \mathrm{T}(\{10\})=\{6,9,10\}$.
		\end{itemize}
		Thus, $\mathcal{K}(S) = \left\{ \emptyset, \{9\}, \{10\}, \{6,10\}, \{9,10\}, \{6,9,10\} \right\}$.
		\begin{itemize}[label=$\ast$]
			\item If $X=\emptyset$, then $\Gamma(S)\cup X=\langle 4,13,14,15 \rangle$.
			\item If $X=\{9\}$, then $\Gamma(S)\cup X=\langle 4,9,14,15 \rangle$.
			\item If $X=\{10\}$, then $\Gamma(S)\cup X=\langle 4,10,13,15 \rangle$.
			\item If $X=\{6,10\}$, then $\Gamma(S)\cup X=\langle 4,6,13,15 \rangle $.
			\item If $X=\{9,10\}$, then $\Gamma(S)\cup X=\langle 4,9,10,15 \rangle $.
			\item If $X=\{6,9,10\}$, then $\Gamma(S)\cup X=\langle 4,6,9 \rangle $.
		\end{itemize}
		Thereby,
		\begin{eqnarray}
			[\langle 4,6,9 \rangle] = \{ \langle 4,6,9 \rangle, \langle 4,6,13,15 \rangle, \langle 4,9,10,15 \rangle, \nonumber \\ \langle 4,9,14,15 \rangle, \langle 4,10,13,15 \rangle, \langle 4,13,14,15 \rangle \}. \nonumber
		\end{eqnarray}
	\end{itemize}
	In conclusion,
	\begin{eqnarray}
		\mathscr{S}(\{4\},11) = \{ \langle 2,13 \rangle, \langle 4,5 \rangle, \langle 4,6,9 \rangle, \langle 4,6,13,15 \rangle, \langle 4,9,10,15 \rangle, \nonumber \\ \langle 4,9,14,15 \rangle, \langle 4,10,13,15 \rangle, \langle 4,13,14,15 \rangle \}. \nonumber
	\end{eqnarray}
\end{example}

\section{The set $\mathscr{M}\boldsymbol{(A,B)}$}\label{sect-M(A,B)}

In this section $A$ is a non-empty subset of $\mathbb{N}$ and $B$ is a non-empty finite subset of $\mathbb{N} \setminus \{0\}$.

Firstly, we see an easy result.

\begin{proposition}\label{prop-22}
	$\mathscr{S}(A,B)\not=\emptyset$ if and only if $B\cap\langle A \rangle=\emptyset$.
\end{proposition}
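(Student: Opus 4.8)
The plan is to prove the equivalence in both directions, mirroring the structure of Proposition~\ref{prop-1}, since $\mathscr{S}(A,B)$ is defined by the conjunction $A\subseteq S$ and $S\cap B=\emptyset$, and these two conditions force $\langle A\rangle\subseteq S$ and $\langle A\rangle\cap B=\emptyset$.

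For the necessity direction, suppose $\mathscr{S}(A,B)\neq\emptyset$ and pick $S\in\mathscr{S}(A,B)$. Since $A\subseteq S$ and $S$ is a submonoid of $(\mathbb{N},+)$, we get $\langle A\rangle\subseteq S$ by minimality of $\langle A\rangle$ among submonoids containing $A$. Combined with $S\cap B=\emptyset$, this immediately yields $\langle A\rangle\cap B=\emptyset$.

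For the sufficiency direction, suppose $B\cap\langle A\rangle=\emptyset$. I would exhibit an explicit numerical semigroup in $\mathscr{S}(A,B)$, just as in Proposition~\ref{prop-1}. Let $F=\max(B)$ and set $S=\langle A\rangle\cup\{F+1,\to\}$; this is a submonoid of $\mathbb{N}$ (the tail $\{F+1,\to\}$ is absorbing, and $\langle A\rangle$ is closed under addition), and $\mathbb{N}\setminus S$ is finite, so $S$ is a numerical semigroup. Clearly $A\subseteq\langle A\rangle\subseteq S$. It remains to check $S\cap B=\emptyset$: any $b\in B$ satisfies $b\leq F<F+1$, so $b\notin\{F+1,\to\}$, and $b\notin\langle A\rangle$ by hypothesis; hence $b\notin S$. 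Therefore $S\in\mathscr{S}(A,B)$, so $\mathscr{S}(A,B)\neq\emptyset$.

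There is no real obstacle here — the only point requiring a moment's care is confirming that the candidate $S$ really is a numerical semigroup (finiteness of the complement and closure under addition), and that $B$ lies entirely below the tail, which is why the choice $F=\max(B)$ rather than any element of $B$ is the natural one. This matches the remark in the introduction that this result opens Section~\ref{sect-M(A,B)}.
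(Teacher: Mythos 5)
Your proof is correct and follows essentially the same route as the paper: the necessity direction is the same observation that $\langle A\rangle\subseteq S$ and $S\cap B=\emptyset$, and for sufficiency you exhibit exactly the paper's witness $\langle A\rangle\cup\{\max(B)+1,\to\}$, merely spelling out the verification that it is a numerical semigroup avoiding $B$, which the paper leaves implicit.
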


\begin{proof}
	If $S\in\mathscr{S}(A,B)$, then $\langle A \rangle\subseteq S$ and $B\cap S=\emptyset$. Therefore, $B\cap\langle A \rangle=\emptyset$.
	
	For the other implication, it is enough to note that, if $B\cap\langle A \rangle=\emptyset$, then $\langle A \rangle \cup \{\max(B)+1,\to\} \in \mathscr{S}(A,B)$.
\end{proof}

As a consequence of Proposition~\ref{prop-22} we have the following result.

\begin{corollary}\label{cor-23}
	$\mathscr{M}(A,B)\not=\emptyset$ if and only if $B\cap\langle A \rangle=\emptyset$.
\end{corollary}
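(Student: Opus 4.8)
The plan is to reduce the claim to Proposition~\ref{prop-22} by showing that $\mathscr{M}(A,B)\not=\emptyset$ if and only if $\mathscr{S}(A,B)\not=\emptyset$. One direction is immediate: since $\mathscr{M}(A,B)=\mathrm{Maximal}\big(\mathscr{S}(A,B)\big)\subseteq\mathscr{S}(A,B)$, the non-emptiness of $\mathscr{M}(A,B)$ forces $\mathscr{S}(A,B)\not=\emptyset$.

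For the converse, I would argue that a non-empty $\mathscr{S}(A,B)$ always has a maximal element with respect to inclusion. Fix any $S_0\in\mathscr{S}(A,B)$. Because $\mathbb{N}\setminus S_0$ is finite, there are only finitely many numerical semigroups containing $S_0$, hence only finitely many elements of $\mathscr{S}(A,B)$ containing $S_0$; this is exactly the observation already recorded in the paragraph preceding Proposition~\ref{prop-3}. This finite, non-empty, inclusion-ordered set therefore has a maximal element $S^\ast$. Finally I would check that $S^\ast$ is in fact maximal in all of $\mathscr{S}(A,B)$: if some $T\in\mathscr{S}(A,B)$ satisfied $S^\ast\subsetneq T$, then $S_0\subseteq S^\ast\subsetneq T$ would place $T$ in that same finite set, contradicting the choice of $S^\ast$. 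Hence $S^\ast\in\mathscr{M}(A,B)$ and so $\mathscr{M}(A,B)\not=\emptyset$.

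Combining the two directions with Proposition~\ref{prop-22} — which states that $\mathscr{S}(A,B)\not=\emptyset$ if and only if $B\cap\langle A\rangle=\emptyset$ — yields the corollary at once. There is no genuine obstacle here; the only point requiring a sentence of care is that maximality of $S^\ast$ within the set of elements of $\mathscr{S}(A,B)$ lying above $S_0$ already entails maximality within all of $\mathscr{S}(A,B)$, which is just transitivity of inclusion as used above.
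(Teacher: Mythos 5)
Your argument is correct and follows the same route as the paper: the corollary is stated there as an immediate consequence of Proposition~\ref{prop-22}, with the existence of maximal elements of $\mathscr{S}(A,B)$ resting on exactly the finiteness observation (finitely many numerical semigroups contain a given one) recorded before Proposition~\ref{prop-3}, which you have simply spelled out in detail.
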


In the next result we see how $\mathscr{S}(A,B)$ can be built from the elements of $B$.

\begin{proposition}\label{prop-24}
	If $B=\{b_1,\ldots,b_r\}$, then
	\[ \mathscr{S}(A,B) = \left\{ S_1\cap\ldots\cap S_r \mid S_i\in \mathscr{S}(A,\{b_i\}), \; 1 \leq i \leq r \right\}. \]
\end{proposition}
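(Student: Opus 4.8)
The plan is to establish the set equality by proving the two inclusions, and I expect both to be short. The inclusion $\subseteq$ is essentially free: given $S\in\mathscr{S}(A,B)$, one has $A\subseteq S$ and $S\cap B=\emptyset$, so in particular $b_i\notin S$ for each $i\in\{1,\ldots,r\}$, i.e.\ $S\in\mathscr{S}(A,\{b_i\})$ for all $i$; taking $S_1=\cdots=S_r=S$ then exhibits $S$ as the required intersection, so $S$ lies in the right-hand set.

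For the inclusion $\supseteq$, I would fix $S_i\in\mathscr{S}(A,\{b_i\})$ for $1\le i\le r$ and set $S=S_1\cap\cdots\cap S_r$. The first task is to check that $S$ is a numerical semigroup: it contains $0$ and is closed under addition because each $S_i$ is a submonoid of $(\mathbb{N},+)$, and $\mathbb{N}\setminus S=\bigcup_{i=1}^{r}(\mathbb{N}\setminus S_i)$ is a finite union of finite sets, hence finite, so $\mathbb{N}\setminus S$ is finite. The remaining two verifications are immediate from the definitions: $A\subseteq S_i$ for every $i$ gives $A\subseteq S$, and $b_i\notin S_i$ gives $b_i\notin S$ for every $i$, so $S\cap B=\emptyset$. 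Therefore $S\in\mathscr{S}(A,B)$, which completes the second inclusion.

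There is no real obstacle here: the only point that is not a pure unwinding of definitions is the finiteness of $\mathbb{N}\setminus S$, and that is just De~Morgan's law applied to the cofiniteness built into the definition of a numerical semigroup. The interest of the statement is structural rather than technical — it reduces the computation of $\mathscr{S}(A,B)$ (and hence, by passing to $\mathrm{Maximal}(\cdot)$, of $\mathscr{M}(A,B)$) to the finitely many ``single forbidden element'' problems $\mathscr{S}(A,\{b_i\})$, each of which can be attacked with the machinery of the previous sections applied to the relevant Frobenius numbers.
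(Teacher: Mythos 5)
Your proof is correct and follows essentially the same route as the paper: the inclusion $\subseteq$ by taking $S_1=\cdots=S_r=S$, and the inclusion $\supseteq$ by checking directly that the intersection lies in $\mathscr{S}(A,B)$ (the paper simply declares this part ``clear'', whereas you spell out closure, cofiniteness via De Morgan, and the conditions $A\subseteq S$, $S\cap B=\emptyset$). Nothing further is needed.
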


\begin{proof}
	Firstly, it is clear that, if $S_i\in \mathscr{S}(A,\{b_i\})$ for all $i\in\{1,\ldots,r\}$, then $S_1\cap\ldots\cap S_r \in \mathscr{S}(A,B)$.
	
	Secondly, if $S \in \mathscr{S}(A,B)$, then $S \in \mathscr{S}(A,\{b_i\})$ for all $i\in\{1,\ldots,r\}$. Consequently, by setting $S_1=\cdots=S_r=S$, we have that $S=S_1\cap\ldots\cap S_r$ with $S_i \in \mathscr{S}(A,\{b_i\})$ for all $i\in\{1,\ldots,r\}$.
\end{proof}

The following lemma is clear from the definitions.

\begin{lemma}\label{lem-25}
	If $b$ is a positive integer, $b\notin \langle A \rangle$, and $S\in \mathscr{S}(A,\{b\})$, then there exists $T\in\mathscr{M}(A,\{b\})$ such that $S\subseteq T$.
\end{lemma}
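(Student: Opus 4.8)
The statement to prove is Lemma~\ref{lem-25}: if $b$ is a positive integer, $b\notin\langle A\rangle$, and $S\in\mathscr{S}(A,\{b\})$, then there exists $T\in\mathscr{M}(A,\{b\})$ with $S\subseteq T$.

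The plan is to exploit the finiteness observation already made in the excerpt: for any numerical semigroup $S$, the complement $\mathbb{N}\setminus S$ is finite, so there are only finitely many numerical semigroups containing $S$. First I would note that $\mathscr{S}(A,\{b\})$ is nonempty by hypothesis (indeed $S$ itself lies in it; alternatively Proposition~\ref{prop-22} applies since $b\notin\langle A\rangle$ means $\{b\}\cap\langle A\rangle=\emptyset$), so the claim is not vacuous. Next, consider the family $\mathscr{F}=\{T\in\mathscr{S}(A,\{b\})\mid S\subseteq T\}$. This family contains $S$, hence is nonempty, and it is contained in the set of all numerical semigroups containing $S$, which is finite; therefore $\mathscr{F}$ is a finite nonempty partially ordered set under inclusion.

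The key step is then to pick a maximal element $T$ of $\mathscr{F}$ (which exists since $\mathscr{F}$ is finite and nonempty) and argue that $T$ is in fact maximal in all of $\mathscr{S}(A,\{b\})$, i.e.\ $T\in\mathscr{M}(A,\{b\})$. Suppose $T'\in\mathscr{S}(A,\{b\})$ with $T\subseteq T'$. Then $S\subseteq T\subseteq T'$, so $T'\in\mathscr{F}$; by maximality of $T$ within $\mathscr{F}$ we get $T=T'$. Hence $T$ is maximal in $\mathscr{S}(A,\{b\})$, that is, $T\in\mathscr{M}(A,\{b\})=\mathrm{Maximal}(\mathscr{S}(A,\{b\}))$, and by construction $S\subseteq T$. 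This completes the argument.

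There is no real obstacle here; the only thing to be careful about is invoking the correct finiteness fact, namely that every numerical semigroup has finite complement so it has only finitely many numerical-semigroup overgroups — this is exactly the observation recorded in the paragraph preceding Proposition~\ref{prop-3}. Everything else is an order-theoretic triviality: a maximal element of a finite subfamily that is upward-closed-relative-to-$S$ is automatically maximal in the whole family. Accordingly the paper rightly states that the lemma is "clear from the definitions," and a one-line proof suffices: since there are only finitely many numerical semigroups containing $S$ and at least one of them (namely $S$) lies in $\mathscr{S}(A,\{b\})$, the nonempty finite poset of those lying in $\mathscr{S}(A,\{b\})$ and containing $S$ has a maximal element $T$, which is then maximal in $\mathscr{S}(A,\{b\})$ because any larger semigroup in $\mathscr{S}(A,\{b\})$ would still contain $S$.
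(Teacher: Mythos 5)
Your argument is correct: since $\mathbb{N}\setminus S$ is finite, only finitely many numerical semigroups contain $S$, so the nonempty family $\{T\in\mathscr{S}(A,\{b\})\mid S\subseteq T\}$ has a maximal element, which is then maximal in all of $\mathscr{S}(A,\{b\})$ because any larger member of $\mathscr{S}(A,\{b\})$ still contains $S$. This is precisely the reasoning the paper leaves implicit when it declares the lemma clear from the definitions (using the finiteness observation recorded just before Proposition~\ref{prop-3}), so your proof matches the intended one.
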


Let us now see how $\mathscr{M}(A,B)$ can be built from the elements of $B$.

\begin{theorem}\label{thm-26}
	If $B=\{b_1,\ldots,b_r\}$, then
	\[ \mathscr{M}(A,B) = \mathrm{Maximal}\big(\left\{ S_1\cap\ldots\cap S_r \mid S_i\in \mathscr{M}(A,\{b_i\}), \; 1 \leq i \leq r \right\}\big). \]
\end{theorem}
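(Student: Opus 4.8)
\textbf{Proof plan for Theorem~\ref{thm-26}.}

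The plan is to prove the two inclusions separately, using Proposition~\ref{prop-24} to control $\mathscr{S}(A,B)$ and Lemma~\ref{lem-25} to push intersections of members of $\mathscr{S}(A,\{b_i\})$ up to intersections of maximal ones. Write $\mathcal{F} = \left\{ S_1\cap\ldots\cap S_r \mid S_i\in \mathscr{M}(A,\{b_i\}),\ 1\leq i\leq r \right\}$ for the family on the right-hand side, and note first that by Corollary~\ref{cor-23} we may assume $B\cap\langle A\rangle=\emptyset$ (otherwise both sides are empty), so that each $\mathscr{M}(A,\{b_i\})$ is non-empty by Lemma~\ref{lem-25}'s hypothesis being met. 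Since each $S_i\in\mathscr{M}(A,\{b_i\})\subseteq\mathscr{S}(A,\{b_i\})$, Proposition~\ref{prop-24} gives $\mathcal{F}\subseteq\mathscr{S}(A,B)$, hence $\mathrm{Maximal}(\mathcal{F})\subseteq\mathscr{S}(A,B)$ as well.

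For the inclusion $\mathscr{M}(A,B)\subseteq\mathrm{Maximal}(\mathcal{F})$: let $S\in\mathscr{M}(A,B)$. By Proposition~\ref{prop-24} we may write $S=S_1\cap\ldots\cap S_r$ with $S_i\in\mathscr{S}(A,\{b_i\})$ (concretely, $S_i=S$ works). Applying Lemma~\ref{lem-25} to each $S_i$, choose $T_i\in\mathscr{M}(A,\{b_i\})$ with $S_i\subseteq T_i$; then $S=S_1\cap\ldots\cap S_r\subseteq T_1\cap\ldots\cap T_r\in\mathcal{F}\subseteq\mathscr{S}(A,B)$. Since $S$ is maximal in $\mathscr{S}(A,B)$ and $T_1\cap\ldots\cap T_r\in\mathscr{S}(A,B)$ contains it, we get $S=T_1\cap\ldots\cap T_r\in\mathcal{F}$. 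Finally, $S$ is maximal in $\mathscr{S}(A,B)\supseteq\mathcal{F}$, hence a fortiori maximal in $\mathcal{F}$, so $S\in\mathrm{Maximal}(\mathcal{F})$.

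For the reverse inclusion $\mathrm{Maximal}(\mathcal{F})\subseteq\mathscr{M}(A,B)$: let $S\in\mathrm{Maximal}(\mathcal{F})$. We already know $S\in\mathscr{S}(A,B)$, so it remains to show $S$ is maximal there. Suppose $T\in\mathscr{S}(A,B)$ with $S\subseteq T$. By Proposition~\ref{prop-24}, $T\in\mathscr{S}(A,\{b_i\})$ for each $i$; as in the previous paragraph, pick $T_i\in\mathscr{M}(A,\{b_i\})$ with $T\subseteq T_i$, so that $T\subseteq T_1\cap\ldots\cap T_r\in\mathcal{F}$. Then $S\subseteq T\subseteq T_1\cap\ldots\cap T_r$, and since $S$ is maximal in $\mathcal{F}$ and $T_1\cap\ldots\cap T_r\in\mathcal{F}$, we conclude $S=T_1\cap\ldots\cap T_r$, forcing $S=T$. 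Hence $S\in\mathscr{M}(A,B)$.

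The only genuinely delicate point is making sure the maximality transfers correctly between the (larger) ambient family $\mathscr{S}(A,B)$ and the (smaller) subfamily $\mathcal{F}$: one needs $\mathcal{F}\subseteq\mathscr{S}(A,B)$ together with the property that every member of $\mathscr{S}(A,B)$ is dominated by some member of $\mathcal{F}$, which is exactly what the Lemma~\ref{lem-25} argument supplies. Everything else is a routine unwinding of definitions, so I would present the argument essentially at the level of detail above. One should also remark at the start that the case $\mathscr{S}(A,B)=\emptyset$ (equivalently $B\cap\langle A\rangle\neq\emptyset$) is handled trivially since then $\mathcal{F}=\emptyset$ too, because no $\mathscr{S}(A,\{b_i\})$ with $b_i\in\langle A\rangle$ admits a member.
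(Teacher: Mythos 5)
Your proof is correct and follows essentially the same route as the paper: Proposition~\ref{prop-24} gives the containments in $\mathscr{S}(A,B)$ and Lemma~\ref{lem-25} lifts the factors $S_i$ to maximal $T_i$, yielding $\mathscr{M}(A,B)\subseteq\mathcal{F}\subseteq\mathscr{S}(A,B)$. The only difference is that you spell out the final transfer of maximality between $\mathcal{F}$ and $\mathscr{S}(A,B)$ (and the degenerate case $B\cap\langle A\rangle\neq\emptyset$), which the paper dismisses with ``the conclusion follows immediately.''
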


\begin{proof}
	If $S\in \mathscr{M}(A,B)$, then $S\in\mathscr{S}(A,B)$ and, by Proposition~\ref{prop-24}, there exists $S_i\in\mathscr{S}(A,\{b_i\})$, $i\in\{1,\ldots,r\}$, such that $S=S_1\cap\ldots\cap S_r$. Moreover, from Lemma~\ref{lem-25}, there exists $T_i\in\mathscr{S}(M,\{b_i\})$, $i\in\{1,\ldots,r\}$, such that $S_i\subseteq T_i$. Thus, if $T=T_1\cap\ldots\cap T_r$, then $S\subseteq T$ and, again by Proposition~\ref{prop-24}, we have that $T\in\mathscr{S}(A,B)$. Thereby, since $S\in \mathscr{M}(A,B)$, we conclude that $S=T$ and, consequently,
	\[ \mathscr{M}(A,B) \subseteq \left\{ S_1\cap\ldots\cap S_r \mid S_i\in \mathscr{M}(A,\{b_i\}), \; 1 \leq i \leq r \right\}. \]
	Now, from Proposition~\ref{prop-24}, we have that
	\[ \left\{ S_1\cap\ldots\cap S_r \mid S_i\in \mathscr{M}(A,\{b_i\}), \; 1 \leq i \leq r \right\} \subseteq \mathscr{S}(A,B). \]
	From here, the conclusion follows immediately.
\end{proof}

The next result is easy to verify.

\begin{lemma}\label{lem-27}
	If $S$ and $T$ are numerical semigroups, then $\mathrm{F}(S\cap T)$ is equal to $\max\{\mathrm{F}(S),\mathrm{F}(T)\}$.
\end{lemma}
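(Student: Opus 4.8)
The plan is to establish the two inequalities $\mathrm{F}(S\cap T)\le\max\{\mathrm{F}(S),\mathrm{F}(T)\}$ and $\mathrm{F}(S\cap T)\ge\max\{\mathrm{F}(S),\mathrm{F}(T)\}$ separately, working directly from the definition of the Frobenius number as the largest integer not belonging to the semigroup. Note first that $S\cap T$ is itself a numerical semigroup: it contains $0$, it is closed under addition as an intersection of two sets with that property, and $\mathbb{N}\setminus(S\cap T)=(\mathbb{N}\setminus S)\cup(\mathbb{N}\setminus T)$ is finite; hence $\mathrm{F}(S\cap T)$ is well defined.

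For the inequality $\le$, write $M=\max\{\mathrm{F}(S),\mathrm{F}(T)\}$ and take any integer $x>M$. Then $x>\mathrm{F}(S)$ and $x>\mathrm{F}(T)$, so by the definition of the Frobenius number $x\in S$ and $x\in T$, whence $x\in S\cap T$. Thus every integer strictly larger than $M$ lies in $S\cap T$, which forces $\mathrm{F}(S\cap T)\le M$.

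For the inequality $\ge$, assume without loss of generality that $M=\mathrm{F}(S)$, so that $\mathrm{F}(S)\ge\mathrm{F}(T)$. By definition $\mathrm{F}(S)\notin S$, hence $\mathrm{F}(S)\notin S\cap T$, so $\mathrm{F}(S)$ is an element of $\mathbb{N}\setminus(S\cap T)$; since $\mathrm{F}(S\cap T)$ is the \emph{greatest} such element, we get $\mathrm{F}(S\cap T)\ge\mathrm{F}(S)=M$. Combining the two inequalities yields $\mathrm{F}(S\cap T)=M=\max\{\mathrm{F}(S),\mathrm{F}(T)\}$, as claimed.

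There is no real obstacle here; the only point that needs a moment's care is the bookkeeping observation that $S\cap T$ is a numerical semigroup so that both sides of the asserted equality refer to genuinely defined invariants, and the symmetry reduction (taking $M=\mathrm{F}(S)$) in the second half.
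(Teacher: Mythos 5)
Your proof is correct and is exactly the standard argument the paper has in mind when it states the lemma is ``easy to verify'' without proof: both inequalities follow directly from the definition of the Frobenius number, and your preliminary check that $S\cap T$ is a numerical semigroup is the right bookkeeping. Nothing to add.
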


In the following result we determine $\mathscr{M}(A,\{b\})$.

\begin{proposition}\label{prop-28}
	If $b$ is a positive integer such that $b\notin \langle A \rangle$, then $\mathscr{M}(A,\{b\}) = \mathscr{I}(A,b)$.
\end{proposition}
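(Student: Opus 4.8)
The plan is to prove the two set inclusions $\mathscr{I}(A,b) \subseteq \mathscr{M}(A,\{b\})$ and $\mathscr{M}(A,\{b\}) \subseteq \mathscr{I}(A,b)$ separately, using Lemma~\ref{lem-2} (the characterization of irreducibility as maximality among numerical semigroups avoiding the Frobenius number) together with Lemma~\ref{lem-27} (the Frobenius number of an intersection) as the two main tools. Recall that $\mathscr{S}(A,\{b\})=\{S\in\mathscr{S}\mid A\subseteq S,\ b\notin S\}$ and $\mathscr{M}(A,\{b\})=\mathrm{Maximal}(\mathscr{S}(A,\{b\}))$, while $\mathscr{I}(A,b)=\{S\in\mathscr{S}\mid A\subseteq S,\ \mathrm{F}(S)=b,\ S\text{ irreducible}\}$. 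Both sets are non-empty by Corollary~\ref{cor-23}/Proposition~\ref{prop-22} and Proposition~\ref{prop-3}, since $b\notin\langle A\rangle$.

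First I would show $\mathscr{I}(A,b)\subseteq\mathscr{M}(A,\{b\})$. Let $S\in\mathscr{I}(A,b)$. Then $A\subseteq S$ and $b=\mathrm{F}(S)\notin S$, so $S\in\mathscr{S}(A,\{b\})$; it remains to check maximality in this family. Suppose $T\in\mathscr{S}(A,\{b\})$ with $S\subseteq T$. By Lemma~\ref{lem-2}, $S$ is maximal among \emph{all} numerical semigroups not containing $b$; since $b\notin T$, this maximality forces $S=T$. Hence $S\in\mathscr{M}(A,\{b\})$.

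For the reverse inclusion $\mathscr{M}(A,\{b\})\subseteq\mathscr{I}(A,b)$, let $S\in\mathscr{M}(A,\{b\})$. The key point is to identify $\mathrm{F}(S)$ and then invoke Lemma~\ref{lem-5} (or Lemma~\ref{lem-2}). Since $b\notin S$, we have $\mathrm{F}(S)\geq b$. I would argue $\mathrm{F}(S)=b$: take any $S\in\mathscr{S}(A,\{b\})$ and consider the irreducible numerical semigroup $\Delta(\langle A\rangle\cup\{b+1,\to\})=\mathrm{C}(A,b)$, which lies in $\mathscr{I}(A,b)$ and contains $\langle A\rangle\cup\{b+1,\to\}$; more directly, given $S\in\mathscr{M}(A,\{b\})$ with $\mathrm{F}(S)>b$, one checks that $S'=S\cup\{\mathrm{F}(S)\}$ still satisfies $A\subseteq S'$ (as $\mathrm{F}(S)\notin A$ because $A\subseteq S$) and $b\notin S'$ (as $\mathrm{F}(S)>b$ means we did not add $b$), and $S'$ is a numerical semigroup because adding the Frobenius number of any numerical semigroup to it yields a numerical semigroup; this contradicts maximality of $S$. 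Therefore $\mathrm{F}(S)=b$. Finally, by Lemma~\ref{lem-5}, $\Delta(S)$ is an irreducible numerical semigroup with Frobenius number $b$ and $S\subseteq\Delta(S)$; moreover $A\subseteq S\subseteq\Delta(S)$ and $b=\mathrm{F}(\Delta(S))\notin\Delta(S)$, so $\Delta(S)\in\mathscr{S}(A,\{b\})$. Maximality of $S$ gives $S=\Delta(S)$, so $S$ is irreducible with $A\subseteq S$ and $\mathrm{F}(S)=b$, i.e.\ $S\in\mathscr{I}(A,b)$.

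The main obstacle is the middle step of the second inclusion: pinning down that a maximal element of $\mathscr{S}(A,\{b\})$ must have Frobenius number exactly $b$ rather than something larger. The clean way is the ``add the Frobenius number'' move just described, which relies on the standard fact that if $N$ is a numerical semigroup then $N\cup\{\mathrm{F}(N)\}$ is again a numerical semigroup (closure under addition is immediate since $\mathrm{F}(N)+n>\mathrm{F}(N)$ for $n\in N\setminus\{0\}$, hence lies in $N$), and on the trivial observation that $\mathrm{F}(S)\notin A$ because $A\subseteq S$. Once this is in place, everything else is a direct application of Lemma~\ref{lem-2} and Lemma~\ref{lem-5}.
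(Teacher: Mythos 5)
Your proof is correct, and one half of it follows a noticeably different path from the paper's. For the inclusion $\mathscr{I}(A,b)\subseteq\mathscr{M}(A,\{b\})$ you argue exactly as the paper does, via item 3 of Lemma~\ref{lem-2}. For the reverse inclusion the paper simply states that $\mathrm{F}(S)=b$ can be ``easily deduced'' and then argues by contradiction: if $S$ were not irreducible it would be a strict intersection $S=T_1\cap T_2$, and Lemma~\ref{lem-27} gives (without loss of generality) $\mathrm{F}(T_1)=b$, so $T_1$ is a strictly larger element of $\mathscr{S}(A,\{b\})$, contradicting maximality. You instead first prove the glossed-over step $\mathrm{F}(S)=b$ explicitly, by adjoining the Frobenius number (if $\mathrm{F}(S)>b$ then $S\cup\{\mathrm{F}(S)\}$ is a numerical semigroup in $\mathscr{S}(A,\{b\})$ strictly containing $S$), and then avoid the contradiction-by-decomposition entirely: by Lemma~\ref{lem-5}, $\Delta(S)$ is an irreducible numerical semigroup with Frobenius number $b$ containing $S$ and hence lies in $\mathscr{S}(A,\{b\})$, so maximality forces $S=\Delta(S)$, which is irreducible. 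Your route uses Lemma~\ref{lem-5} (the $\Delta$ operator) where the paper uses only the definition of irreducibility plus Lemma~\ref{lem-27}; in exchange it is more self-contained, since it documents why a maximal element of $\mathscr{S}(A,\{b\})$ must have Frobenius number exactly $b$, which is precisely the point the paper leaves to the reader. (Your parenthetical remark that $\mathrm{F}(S)\notin A$ is unnecessary: $A\subseteq S\subseteq S\cup\{\mathrm{F}(S)\}$ already gives what you need; this is cosmetic, not a gap.)
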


\begin{proof}
	If $S\in \mathscr{M}(A,\{b\})$, then we can easily deduce that $A\subseteq S$ and $\mathrm{F}(S)=b$. Let us suppose now that $S$ is not irreducible. In such a case, there exist two numerical semigroups, $T_1$ and $T_2$, such that $S\subsetneq T_1$, $S\subsetneq T_2$, and $S=T_1\cap T_2$. From Lemma~\ref{lem-27}, we know that $\mathrm{F}(S)=\max\{\mathrm{F}(T_1),\mathrm{F}(T_2)\}$. In particular, and without loss of generality, we can assume that $\mathrm{F}(S)=\mathrm{F}(T_1)$. Thus, $T_1\in\mathscr{S}(A,\{b\})$, $S\subsetneq T_1$, and hence $S\notin \mathscr{M}(A,\{b\})$, which is a contradiction.
	
	For the reverse inclusion, let us take $S\in\mathscr{I}(A,b)$ and $T\in \mathscr{S}(A,\{b\})$ such that $S\subseteq T$. Then, by item 3 of Lemma~\ref{lem-2}, we get that $S=T$ and, thereby, $S\in\mathscr{M}(A,\{b\})$.
\end{proof}

As a consequence of Theorem~\ref{thm-26} and Propositions~\ref{prop-30} and \ref{prop-28}, we have the next result. Recall that, if $\mathrm{Ap}(S,n)=\{w(0),w(1),\ldots,w(n-1)\}$, then we denote by $\theta_n(S)=(w(1),\ldots,w(n-1))$.

\begin{theorem}\label{thm-31}
	If $B=\{b_1,\ldots,b_r\}$ and $n=\max(B)+1$, then
	\[ \mathscr{M}(A,B) = \big\{ S\in\mathscr{S}_n \mid \theta_n(S)\in\mathrm{Minimal}(\Theta(A,B)) \big\}  \]
	where $\Theta(A,B) = \left\{ \theta(S_1)\lor\cdots\lor\theta(S_r) \mid S_i\in\mathscr{I}(A,b_i), 1\leq i\leq r \right\}$.
%	\begin{eqnarray}
%		\mathscr{M}(A,B) = \big\{ S\in\mathscr{S}_n \mid \theta_n(S)\in\mathrm{Minimal}\big( \left\{ \theta(S_1)\lor\cdots\lor\theta(S_r) \mid \right. \nonumber \\ \left. S_i\in\mathscr{I}(A,b_i), 1\leq i\leq r \right\} \big) \big\}. \nonumber
%	\end{eqnarray}
\end{theorem}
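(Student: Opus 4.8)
The plan is to combine Theorem~\ref{thm-26}, Proposition~\ref{prop-28}, and Proposition~\ref{prop-30} into a single statement expressed in terms of the invariant $\theta_n$. First I would fix $n = \max(B)+1$ and observe that every numerical semigroup $S$ appearing in the problem satisfies $n \in S$: indeed, if $S \in \mathscr{S}(A,B)$ then $\mathrm{F}(S) \leq \max(B) < n$, so $n \in S$ and hence $S \in \mathscr{S}_n$; likewise every $S_i \in \mathscr{I}(A,b_i)$ has Frobenius number $b_i \leq \max(B) < n$, so $S_i \in \mathscr{S}_n$. This places us entirely inside $\mathscr{S}_n$, where Proposition~\ref{prop-30} applies and $\theta_n$ is an injection that is order-reversing and turns intersections into the $\lor$-operation.

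Next I would rewrite the set from Theorem~\ref{thm-26}. By Proposition~\ref{prop-28}, $\mathscr{M}(A,\{b_i\}) = \mathscr{I}(A,b_i)$, so
\[ \mathscr{M}(A,B) = \mathrm{Maximal}\big(\{ S_1\cap\cdots\cap S_r \mid S_i\in\mathscr{I}(A,b_i),\ 1\leq i\leq r \}\big). \]
Call this family $\mathscr{F}$. Since all the $S_i$ lie in $\mathscr{S}_n$ and $\mathscr{S}_n$ is closed under finite intersections, $\mathscr{F}\subseteq\mathscr{S}_n$, so I can apply $\theta_n$. By part~(4) of Proposition~\ref{prop-30}, iterated, $\theta_n(S_1\cap\cdots\cap S_r) = \theta_n(S_1)\lor\cdots\lor\theta_n(S_r)$; writing $\theta$ for $\theta_n$ as the statement does, the image $\theta_n(\mathscr{F})$ is exactly $\Theta(A,B)$. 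Then, because $\theta_n$ is injective (part~1) and reverses the order (part~3), it carries maximal elements of $\mathscr{F}$ to minimal elements of $\theta_n(\mathscr{F})=\Theta(A,B)$, and conversely; so $S\in\mathscr{M}(A,B)$ iff $S\in\mathscr{S}_n$ and $\theta_n(S)\in\mathrm{Minimal}(\Theta(A,B))$. One small point to check here is that every element of $\Theta(A,B)$ actually lies in $\mathrm{Im}(\theta_n)$, which it does since it is $\theta_n$ of an intersection of semigroups in $\mathscr{S}_n$; combined with part~(2) of Proposition~\ref{prop-30} this guarantees that each minimal vector is realised by a genuine $S\in\mathscr{S}_n$.

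The only genuinely delicate step is the passage "maximal under $\subseteq$ corresponds to minimal under $\leq$ on the image". Because $\theta_n$ is an order-reversing bijection between $(\mathscr{F},\subseteq)$ and $(\theta_n(\mathscr{F}),\leq)$, this is a formal consequence, but I would state it carefully: for $S,T\in\mathscr{F}$ we have $S\subseteq T \iff \theta_n(T)\leq\theta_n(S)$, so $S$ is maximal in $\mathscr{F}$ iff $\theta_n(S)$ is minimal in $\theta_n(\mathscr{F})$. I expect this order-correspondence bookkeeping — together with verifying the reduction of the statement's right-hand side from "$S\in\mathscr{S}_n$ with $\theta_n(S)$ minimal" back to "$S$ is a maximal intersection" — to be the part that needs the most care, whereas the containment $\mathscr{F}\subseteq\mathscr{S}_n$ and the translation of $\cap$ into $\lor$ are routine once $n=\max(B)+1$ is fixed.
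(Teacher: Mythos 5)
Your argument is correct and is essentially the paper's own proof, which is given only as ``a consequence of Theorem~\ref{thm-26} and Propositions~\ref{prop-30} and \ref{prop-28}'' --- you have simply spelled out the order-reversing correspondence that those results yield. One small inaccuracy: an arbitrary $S\in\mathscr{S}(A,B)$ need \emph{not} satisfy $\mathrm{F}(S)\leq\max(B)$ (the condition $S\cap B=\emptyset$ does not bound the Frobenius number), but this side remark is harmless, since your proof only needs that the intersections $S_1\cap\cdots\cap S_r$ with $S_i\in\mathscr{I}(A,b_i)$ lie in $\mathscr{S}_n$, which you justify correctly from $\mathrm{F}(S_i)=b_i<n$ and closure of $\mathscr{S}_n$ under intersection.
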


If $S$ is a numerical semigroup and $n\in S\setminus\{0\}$, then we can obtain a list $[x_0,\ldots,x_{n-1}]$ such that $\mathrm{Ap}(S,n) = \{w(0)=x_0,\ldots,w(n-1)=x_{n-1}\}$ by using the function \texttt{AperyListOfNumericalSemigroupWRTElement(S,n)} of \cite{numericalsgps}. Thus, we have an algorithmic procedure for computing $\theta_n(S)$ from a minimal system of generators of $S$ and an element $n\in S\setminus\{0\}$.

Let us now show the algorithm for computing $\mathscr{M}(A,B)$.

\begin{algorithm}\label{alg-32}
	\mbox{ } \par	
	\noindent $\ $ INPUT: $A$ (non-empty subset of $\mathbb{N}$) and $B=\{b_1,\ldots,b_r\}$ (non-empty finite subset of $\mathbb{N} \setminus \{0\}$) such that $\langle A \rangle\cap B=\emptyset$. \par \smallskip
	\noindent $\ $ OUTPUT: $\mathscr{M}(A,B)$.
	\vspace{-3pt}
	\begin{itemize}
		\setlength\itemsep{0pt}
		\item[(1)] Using Algorithm~\ref{alg-15}, compute $\mathscr{I}(A,b_i)$ for all $i\in\{1,\ldots,r\}$.
		\item[(2)] $n=\max(B)+1$.
		\item[(3)] For each $i\in\{1,\ldots,r\}$, compute $E_i=\{ \theta_n(S) \mid S\in\mathscr{I}(A,b_i) \}$.
		\item[(4)] $E=\left\{\alpha_1\lor\cdots\lor\alpha_r \mid \alpha_i\in E_i \mbox{ for all } i\in\{1,\ldots,r\} \right\}$.
		\item[(5)] $\mathscr{E} = \mathrm{Minimal}(E)$.
		\item[(6)] Return $\big\{ \langle \{x_1,\ldots,x_{n-1},n\} \rangle \mid (x_1,\ldots,x_{n-1})\in \mathscr{E} \big\}$.
	\end{itemize}
\end{algorithm}

\begin{remark}\label{rem-32}
	In \cite{royal} it is shown an algorithm that allows to compute all the numerical semigroups which are maximal in the set of numerical semigroups that have empty intersection with $B$. Considering that this set is $\mathscr{M}(\{0\},B)$, we have that Algorithm~\ref{alg-32} is a generalisation of the algorithm seen in \cite{royal}.
\end{remark}

Let us see, in an example, how Algorithm~\ref{alg-32} works.

\begin{example}\label{exmp-32}
	Let us compute $\mathscr{M}(\{4,9\},\{11,14\})$.
	\begin{itemize}
		\setlength\itemsep{0pt}
		\item From Algorithm~\ref{alg-15}, we have that
		\begin{itemize}[label=$\ast$]
			\item $\mathscr{I}(\{4,9\},11) = \left\{ \langle 4,15 \rangle, \langle 4,6,9 \rangle \right\}$.
			\item $\mathscr{I}(\{4,9\},14) = \left\{ \langle 4,9,11 \rangle \right\}$.
		\end{itemize}
		\item $n=\max(\{11,14\})+1=15$.
		\item Let $b_1=11$ and $b_2=14$.
		\begin{itemize}[label=$\ast$]
			\item $E_1=\{\alpha_{11},\alpha_{12}\} = \{ (16,17,18,4,20,6,22,8,9,10,26,12,13,14), \\ \mbox{} \hspace{3cm} (16,17,18,4,5,21,22,8,9,10,26,12,13,14) \}$.
			\item $E_2=\{\alpha_2\} = \{ (16,17,18,4,20,21,22,8,9,25,11,12,13,29) \}$.
		\end{itemize}
		\item $E=\left\{\alpha_{11}\lor\alpha_2, \alpha_{12}\lor\alpha_2\right\} = \\ \mbox{} \hspace{0.75cm} \{(16,17,18,4,20,21,22,8,9,25,26,12,13,29)\}$.
		\item $\mathscr{E} = \{(16,17,18,4,20,21,22,8,9,25,26,12,13,29)\}$.
		\item $\mathscr{M}(\{4,9\},\{11,14\}) = \left\{ \langle 15,16,17,18,4,20,21,22,8,9,25,26,12,13,29 \rangle \right\}$.
	\end{itemize}
	In conclusion,
	\[ \mathscr{M}(\{4,9\},\{11,14\}) = \left\{ \langle 4,9,15 \rangle \right\}. \]
\end{example}

\section{The algorithm}\label{sect-problem}

We are now in conditions to show the algorithm for computing $\mathfrak{m}(A,B)$ and, in this way, for giving an answer to Problem~\ref{prob}.

\begin{algorithm}\label{alg-38}
	\mbox{ } \par	
	\noindent $\ $ INPUT: $A$ (non-empty subset of $\mathbb{N}$) and $B$ (non-empty finite subset of $\mathbb{N} \setminus \{0\}$) such that $\langle A \rangle\cap B=\emptyset$. \par \smallskip
	\noindent $\ $ OUTPUT: $\mathfrak{m}(A,B)$.
	\vspace{-3pt}
	\begin{itemize}
		\setlength\itemsep{0pt}
		\item[(1)] Using Algorithm~\ref{alg-32}, compute $\mathscr{M}(A,B)$.
		\item[(2)] Return $ \{ \mathbb{N}\setminus S \mid S \in \mathscr{M}(A,B) \} $.
	\end{itemize}
\end{algorithm}

\begin{remark}\label{rem-38}
	In \cite{royal} it is shown an algorithm that computes all sets $C \subseteq \mathbb{N}\setminus\{0\}$ that are minimal with the condition that, if $(x_1,\ldots,x_n) \in \mathfrak{P}(B)$, then $\{x_1,\ldots,x_n\} \cap C \not= \emptyset$. Let us observe that such sets are nothing but the elements of $\mathfrak{m}(\{0\},B)$. Therefore, Algorithm~\ref{alg-38} is a generalisation of the algorithm seen in \cite{royal}.
\end{remark}

\begin{example}
	From Example~\ref{exmp-32} we deduce that a set $K$ satisfies the conditions
	\begin{itemize}
		\item $K \cap \langle 4,9 \rangle = \emptyset$,
		\item for every partition of 11 or 14, there is at least one summand that belongs to $K$,
	\end{itemize}
	if and only if $K$ contains the set $\mathbb{N} \setminus \langle 4,9,15 \rangle = \{1,2,3,5,6,7,10,11,14\}$.
\end{example}

\section*{Acknowledgement}

Both authors are supported by the project MTM2017-84890-P (funded by Mi\-nis\-terio de Econom\'{\i}a, Industria y Competitividad and Fondo Europeo de Desarrollo Regional FEDER) and by the Junta de Andaluc\'{\i}a Grant Number FQM-343.

\bigskip

\noindent \textbf{Data availability statement.} This manuscript has no associated data.

\end{document}